\newcommand{\mat}[1]{\ensuremath{\begin{pmatrix}#1\end{pmatrix}}}	
\newcommand{\RR}{\mathbb{R}}
\newcommand{\Om}{\Omega}
\newcommand{\SL}{\mathrm{SL}(d)}
\def\NN{\mathbb N}
\def\tr{\,{\rm tr\,}}
\def\cof{\,{\rm cof\,}}
\def\det{\,{\rm det\,}}
\def\argmin{\,{\rm argmin}}
\def\d{\,\mathrm{d}}
\newcommand{\We}{W_{\mathrm{e}}}
\newcommand{\Wp}{W_{\mathrm{p}}}
\newcommand{\bA}{\boldsymbol{A}}
\newcommand{\bF}{\boldsymbol{F}}
\newcommand{\bFe}{\boldsymbol{F}_{\mathrm{e}}}
\newcommand{\bP}{\boldsymbol{P}}
\newcommand{\bX}{\boldsymbol{X}}
\newcommand{\bsigma}{\boldsymbol{\sigma}}
\newcommand{\bI}{\boldsymbol{I}}
\newcommand{\bQ}{\boldsymbol{Q}}
\newcommand{\bB}{\boldsymbol{B}}
\newcommand{\bu}{\boldsymbol{u}}
\newcommand{\bv}{\boldsymbol{v}}
\newcommand{\by}{\boldsymbol{y}}
\newcommand{\bx}{\boldsymbol{x}}
\newcommand{\bn}{\boldsymbol{n}}
\renewcommand{\by}{\boldsymbol{y}}
\newcommand{\bg}{\boldsymbol{g}}
\newcommand{\bforce}{\boldsymbol{f}}
\newcommand{\bnabla}{\nabla}
\newcommand{\bphi}{\boldsymbol{\varphi}}
\newcommand{\bvarphi}{\boldsymbol{\varphi}}
\newcommand{\mup}{\mu_{\rm p}}
\newcommand{\muz}{\mu_{\rm z}}
\newcommand{\qe}{q_{\rm e}}
\newcommand{\Hone}[1][]{\ensuremath{H^1\ifthenelse{\equal{#1}{}}{}{(#1)}}}
\newcommand{\Ltwo}[1][]{\ensuremath{L^2\ifthenelse{\equal{#1}{}}{}{(#1)}}}
\numberwithin{equation}{section}
\mathchardef\emptyset="001F
\newtheorem{theorem}{Theorem}[section]
\theoremstyle{definition}
\newtheorem{remark}[theorem]{Remark}
\definecolor{codegreen}{rgb}{0,0.6,0}
\definecolor{codegray}{rgb}{0.5,0.5,0.5}
\definecolor{codepurple}{rgb}{0.58,0,0.82}
\definecolor{backcolour}{rgb}{0.7,0.7,0.7}
\definecolor{light-gray}{gray}{0.9}
\lstdefinestyle{mystyle}{
	backgroundcolor=\color{light-gray},
	commentstyle=\color{codegreen},
	keywordstyle=\color{magenta},
	numberstyle=\tiny\color{codegray},
	stringstyle=\color{codepurple},
	basicstyle=\footnotesize,
	breakatwhitespace=false,         
	breaklines=true,                 
	captionpos=b,                    
	keepspaces=true,                 
	numbers=left,                    
	numbersep=5pt,                  
	showspaces=false,                
	showstringspaces=false,
	showtabs=false,                  
	tabsize=3,
	frame=single
}
\title[A finite-strain model for incomplete damage in elastoplastic materials]{A finite-strain model for incomplete damage in elastoplastic materials}
\date{\today}
\author{David Melching} 
\address[David Melching]{Faculty of Mathematics, University of Vienna, Oskar-Morgenstern-Platz 1, 1090 Wien, Austria.}
\email{david.melching@univie.ac.at}
\urladdr{http://www.mat.univie.ac.at/~melching/}
\author{Michael Neunteufel} 
\address[Michael Neunteufel]{Institute for Analysis and Scientific Computing, TU Wien, Wiedner Hauptstrasse 8-10, 1040 Wien, Austria.}
\email{michael.neunteufel@tuwien.ac.at}
\urladdr{https://www.asc.tuwien.ac.at/~schoeberl/wiki/index.php/Michael\_Neunteufel}
\author{Joachim Sch\"oberl} 
\address[Joachim Sch\"oberl]{Institute for Analysis and Scientific Computing, TU Wien, Wiedner Hauptstrasse 8-10, 1040 Wien, Austria.}
\email{joachim.schoeberl@tuwien.ac.at}
\urladdr{https://www.asc.tuwien.ac.at/~schoeberl/wiki/index.php/Joachim\_Sch{\"o}berl}
\author{Ulisse Stefanelli} 
\address[Ulisse Stefanelli]{Faculty of Mathematics, University of
  Vienna, Oskar-Morgenstern-Platz 1, 1090 Wien, Austria,
Vienna Research Platform on Accelerating Photoreaction Discovery, University of Vienna, W\"ahringer Str. 17, 1090 Vienna, Austria,
  and Istituto di Matematica
Applicata e Tecnologie Informatiche {\it E. Magenes}, v. Ferrata 1, 27100 Pavia, Italy.}
\email{ulisse.stefanelli@univie.ac.at}
\urladdr{mat.univie.ac.at/~stefanelli}
\begin{document}
\counterwithin{lstlisting}{section}
	
\maketitle

\begin{abstract}
We address a three-dimensional model capable of describing coupled
damage and plastic effects in solids at finite strains. Formulated
within the variational setting of {\it generalized standard materials}, the constitutive model results from the balance of conservative and dissipative forces. Material response is rate-independent and associative and damage evolution is unidirectional. We assess the model features and performance on both uniaxial and non-proportional biaxial tests. 

The constitutive model is then complemented with the quasistatic equilibrium system and initial and boundary conditions. We produce numerical simulations with the help of the powerful multiphysics finite element software NETGEN/NGSolve. We show the flexibility of the implementation and run simulations for various 2D and 3D settings under different choices of boundary conditions and possibly in presence of pre-damaged regions.\newline

\noindent
\textbf{\textit{Keywords:}} finite plasticity, damage, large-strains, quasistatic evolution,  finite element method.
\end{abstract}

\section*{Introduction}\label{Sec:Intro}

Damage in elastoplastic material is induced by the onset and development of microvoids and microcracks under deformation. In ductile materials, these often result from the rearranging of crystalline structures along slip planes, as well as by the initiation and the evolution of dislocation structures \cite{LC90}. In a continuum theory, such microscopic phenomena are averaged out and one focuses on a macroscopic phenomenological descriptor $z\in [0,1]$, indicating a pointwise measure of soundness of the material. Reflecting the {\it coupled} evolution of elastoplasticity and damage, the damage parameter $z$ is regarded as an internal variable and is coupled with deformation and plastic strain in the constitutive model for the material. This approach dates back to Kachanov~\cite{Kachanov,Kachanov2} and has been successfully developed
in different settings. The Reader is referred to Chaboche~\cite{Chaboche,Chaboche2_1,Chaboche2_2}, Krajdnovic~\cite{Krajdnovic}, Lemaitre~\cite{Lamaitre,Lamaitre2}, and Murakami~\cite{Murakami}, among others.

In the infinitesimal deformation regime, the engineering literature on coupled damage and plasticity models is vast. Correspondingly, the mathematical treatment of small-strain coupled models has attracted a good deal of attention, see \cite{Bonetti,Crismale,DalMaso,Davoli,MR15,Roub16} as well as the
recent monograph \cite{Kruzik-Roubicek} for a collection of results. Numerical methods for small strain elastoplasticity have also been intensively investigated, see \cite{BCV05,Carst97,Cerm19,Glow81,Grub08,Han95,John77,John78,Nagt74,SH98}. A-posteriori error estimators in space and time together with adaptive mesh refinement strategies to recover optimal convergence rates have been explored \cite{Albert99,Gall96,Peric94,Ramm03}, and high order polynomial methods have been developed \cite{Dust01,Szabo04}.

In the finite-strain setting, the modeling literature on damage in elastoplastic materials is less abundant. Seminal contributions in this direction are due to Ju~\cite{Ju} and Simo~and~Ju~\cite{Simo}. Steinmann, Miehe, and Stein \cite{Steinmann} presented a finite-strain reformulation of Lemaitre~\cite{Lamaitre} and Gurson~\cite{Gurson} models and Mahnken~\cite{Mahnken} extended the finite-plasticity model by Miehe~\cite{Miehe} in order to include damage. More recently, anisotropic behavior has been investigated in \cite{Ganjiani,Menzel}. Algorithms and general strategies to solve finite elastoplastic constitutive models can be found, e.g., in \cite{MIEHE94,MR16,Simo85}. 

From the mathematical standpoint, rigorous existence and approximability results in the finite-elastoplastic setting are just a few. The static problem has been firstly addressed by Mielke \cite{Mielke04b} under restrictions on the choice of the driving functionals, by  Mielke and M\"uller \cite{Mielke-Mueller} under a gradient-type regularization for the plastic strain, and in \cite{compos} in the dislocation-free case. The quasistatic evolution problem has been analyzed by Mainik and Mielke \cite{MM09} under gradient regularization for the plastic strain, see also  \cite{cplas_part1,cplas_part2} for a similar result in a symmetric case, and in \cite{compos2} in the dislocation-free case. The Reader is also referred to \cite{Davoli14,Davoli14b,Giacomini13,MS13,curl} for rigorous linearization results for small strains. 

A first existence result for the coupled quasistatic evolution for finite elastoplasticity and damage has been recently obtained in \cite{MSZ19}. The aim of this paper is to elucidate the mechanical basis of such model and present a suite of numerical simulations. The model is formulated within the classical framework of {\it generalized standard materials} \cite{HN75}. The state of the material system is described in terms of the {\it deformation} $\bvarphi$ from its reference configuration, the {\it plastic strain} $\bP$, and the scalar {\it damage} parameter $z$. As it is customary in finite plasticity, we assume that the deformation gradient $\bF:=\bnabla \bvarphi$ is multiplicatively decomposed as $ \bF = \bFe \bP$ \cite{Kroener, Lee69} where $\bFe$ is the {\it elastic} strain. We assume {\it hyperelastic, isotropic} material response as well as {\it kinematic hardening} dynamics for plasticization. Additionally, we impose $\det \bP=1$ in order to model the possible {\it isochoric} nature of plastic effects \cite{SH98}. 

The damage parameter $z$ takes values in $[0,1]$, where $z=1$ stands for the undamaged material and $z=0$ represents the maximally damaged case. We assume damage dynamics to be {\it isotropic}, {\it unidirectional}, meaning that once damaged the material cannot heal, and {\it incomplete} \cite{LC90}, which entails that even at $z=0$ the material can still sustain stresses. This is for instance the case of a composite material where a damageable component is included in a non-damageable matrix. 

The material constitutive model results from the interplay of energy-storage and dissipation mechanisms \cite{Maugin92}. These are specified by prescribing the {\it total stored energy}
\begin{equation}
W(\bF,\bP,z) = \zeta(z)\widehat \We(\bF \bP^{-1}) + \Wp(\bP)\label{stored0}
\end{equation}
resulting from the sum of the elastic and the plastic (hardening) energies. As for the {\it total dissipation potential} we assume
\begin{equation}
R(\bP,z,\dot \bP, \dot z) = R_{\rm z}(\dot z) + \rho(z)R_{\rm p}(\bP,\dot
\bP) \label{dissi}
\end{equation}
so that damage and plastic dissipative effects are combined via an {\it associative} flow rule. We refer to Section \ref{Sec:Model} for all details and limit ourselves here in observing that the damage parameter $z$ influences the actual elastic response of the medium and the effective plastic yield stress via the monotone {\it coupling functions} $\zeta$ and $\rho$. This corresponds to the case of an elastoplastic material  weakening upon damage accumulation. The material constitutive model is then obtained from the balance of {\it conservative} and {\it dissipative} forces as 
$$\partial_{(\bF,\bP,z)} W (\bF,\bP,z) + \partial_{(\dot \bF,\dot \bP,\dot z)}R (\bP,z,\dot \bP, \dot z) \ni 0,$$
see Subsection \ref{sec:final}. This balance embodies the variational structure of the model, which in turn yields thermodynamical consistency, see Subsection \ref{sec:flow}, and allows for a comprehensive mathematical treatment \cite{MSZ19}. The potentials $R_{\rm z}$ and $R_{\rm p}$ are positively $1$-homogeneous in the rate variables, so that the constitutive model is of {\it rate-independent} type \cite{MR15}, as it is often assumed in damage and plasticity.

We provide a full account of the model derivation in Section
\ref{Sec:Model}. We specify and discuss a choice for
potentials and functions in \eqref{stored0}-\eqref{dissi}, depending
on a minimal set of material parameters. Such choice is then tested on
both uniaxial and biaxial tests in order to illustrate the performance
of the model in consistently reproducing damage and plasticity
coupling and to highlight the role of the material parameters. In
particular, we introduce a semi-implicit time-discretization of the 
constitutive  model.

The constitutive model is then complemented with the quasistatic
equilibrium system in Section \ref{sec:quasi}.  By specifying
boundary conditions, we present an existence result for quasistatic
incremental solutions in Subsection \ref{Subsec:existence}. Details on
regularization and discretization of such problem via conformal finite
elements are also provided.

In Section~\ref{Sec:Examples}, we present a collection of numerical
experiments in discrete time. Approximate incremental problems are solved   
numerically within the frame of the NETGEN/NGSolve finite element
library \footnote{www.ngsolve.org} \cite{Sch97,Sch14}. This provides a
flexible and effective environment for testing different specimen
geometries and material settings.  Numerical simulations both in
two and three space dimensions  are  presented. Examples and code snippets are also provided and some algorithmical aspects related to mesh generation are discussed in the Appendix.

\section{Constitutive model}\label{Sec:Model}

We devote this section to the introduction of the constitutive material model. As mentioned, this fits into the classical frame of {\it generalized standard materials} \cite{HN75}, being governed by the interplay of energy-storage and energy-dissipation mechanisms. In the following, we introduce energy and dissipation separately in Subsections \ref{sec:energy} and \ref{sec:dissipation} and then combine them via flow rules in Subsection \ref{sec:flow}. Eventually, the final form of the constitutive model is given in Subsection \ref{sec:final}. 

The model performance is assessed in Subsection \ref{Subsec:ParameterAnalysis} where we present the results of uniaxial and biaxial tests and we discuss the role of the various material parameters by presenting some numerical simulations. These simulations are based on a semiimplicit time-discrete scheme, which is introduced in Subsection \ref{sec:time-discrete-frame}. Some comments on the algorithmical implementation of the scheme are provided in Subsection \ref{Subsec:ngsolve}.

Let us start by fixing some notation. Throughout the paper, we denote by $d =2,3$ the dimension of the body. Vectors $\boldsymbol v \in \RR^d$ are indicated in boldface and we use the standard notation for the product $\bv \cdot \boldsymbol w = \sum_i v_iw_i$. Matrices $\bA \in \RR^{d\times d}$ are denoted by boldfaced capital letters and are endowed with the product $\bA : \bB = \sum_{i,j}A_{ij}B_{ij}$ and the corresponding norm $|\bA|^2 := \bA: \bA$. We indicate by $\SL$ the special linear group, i.e., matrices $\bA \in \RR^{d\times d}$ satisfying $\det \bA =1$.

We recall some basic notions from convex analysis. Given the convex function $f: \RR^n \to \RR$, a vector $\bv \in \RR^n$ is called {\it subgradient} of $f$ at $\bx \in \RR^n$, if for every $\by \in \RR^n$ one has that $f(\by)-f(\bx) \ge \bv \cdot (\by-\bx)$. The {\it subdifferential} $\partial f(\bx)$ is  the set of all subgradients of $f$ at $\bx$. The {\it Fenchel conjugate} $f^*:\RR^n \to \RR$ is defined as  $f^*(\bv):= \sup \{ \bv\cdot \bx - f(\bx)\, :\,  \bx \in \RR^n \}$ and one has that $\bx \in \partial f^*(\bv)$ iff $ \bv \in \partial f(\bx),$ see, e.g., \cite{R70} for details.

\subsection{Energy}\label{sec:energy} 
As mentioned, the state of the material is described by the {\it strain gradient} $\bF \in \RR^{d\times d}$,  the {\it plastic strain} $\bP\in \SL$, and the {\it damage parameter} $z\in [0,1]$. Recall that $z=1$ corresponds to the undamaged material whereas $z=0$ represents the fully damaged situation. The energy of the material is assumed to be additively decomposed into a purely elastic part and a purely plastic part, see \eqref{stored0}. The former is a function of the elastic strain $\bF \bP^{-1}$ and the latter is a  function of the plastic strain $\bP$. In particular, we introduce the {\it stored energy}
\begin{equation}
W(\bF,\bP,z) = \We(\bF \bP^{-1},z) + \Wp(\bP).\label{stored}
\end{equation}
Here, $\We:\RR^{d\times d} \times [0,1] \to \RR_+$ denotes the {\it elastic energy} whereas $\Wp: \SL \to \RR_+$ represents the {\it kinematic hardening} potential. 

Damage influences the elastic response of the material by modifying the elastic energy density. In particular, as damage accumulates (i.e., $z$ decreases) the elastic energy decreases. This corresponds to the fact that the material weakens upon damaging. This behavior is modeled via the structural assumption
\begin{equation}
\We(\bF \bP^{-1},z) = \zeta(z) \widehat \We(\bF \bP^{-1}),
\end{equation}
where $\widehat \We:\RR^{d\times d} \to \RR_+$ and $\zeta: \RR \to
\RR_+$ is a continuous, positive, and monotone-increasing {\it energy-degradation} function. Specifically, we 
assume $\widehat \We$ to be of {\it Neo--Hookean} type, namely,
\begin{align}
\label{eq:neo_hooke_en}
\widehat \We(\bFe):= \frac{\mu}{2} |\bFe|^2 + \Gamma(\det\bFe), \quad \text{with } \Gamma(\delta) = - \mu\ln(\delta) + \frac{\lambda}{2}(\delta-1)^2 - \frac{\mu d}{2},
\end{align}
for given Lam\'e parameters $\mu > 0$ and $\lambda \ge -d\mu/2$. Note that this choice complies with {\it frame indifference}, i.e., $\widehat \We(\bQ  \bFe ) = \widehat \We(\bFe)$ for all $\bFe \in {\mathbb R}^{d\times d}$ and all rotations $\bQ$ with $\det \bQ =1$  and {\it local non-interpenetrability of matter}, i.e. $\We(\bFe) \to +\infty$ as $\det\bFe \to 0^+$.  Note also that $\We(\bI)=D\We(\bI)=0$ by the choice of $\Gamma$. 
The energy-degradation function $\zeta$ is prescribed as
\begin{equation}\label{def:zeta}
\zeta(z) :=
\zeta_0+(1-\zeta_0)(z^+)^2
\end{equation}
for some $\zeta_0 \in (0,1]$, where $z^+ = \max \{0,z\}$ denotes the {\it positive part}.

As for the kinematic hardening potential $W_{\rm p}$ we assume
\begin{align}
\label{eq:hardening_en}
\Wp(\bP):=\begin{cases}
\displaystyle \frac{H}{2}\, |\bP-\bI|^2, &\text{if }\bP \in K \\
+ \infty, &\text{else.}
\end{cases}
\end{align}
where $H>0$ is the {\it hardening parameter} and  $K$ is compact
in $\SL$ and contains a neighborhood of
$\bI$. The set $K$ acts as constraint on $\bP$, ensuring that we can find a constant $c_K>0$ such that 
\begin{align}
  &\bP \in K \ \ \Rightarrow \ \ |\bP|  \leq
    c_K. \label{conscomp}
\end{align}
Such uniform bounds expedite the existence proof of Section
\ref{Subsec:existence}. However, by computing variations of $\Wp$ in the coming
sections and in computations, the constraint $\bP\in K$ will be
systematically neglected, for it can always be assumed to be inactive
by choosing $K$ large enough.

\subsection{Constitutive relations} \label{sec:consti} 
By computing the variations of the energy with respect to states we obtain the constitutive relations 
\begin{align}
\bsigma &= \frac{\partial W}{\partial \bF} = \zeta(z) D\widehat \We(\bF \bP^{-1}) \bP^{-\top}, \label{stress} \\
\bX &= \frac{\partial W}{\partial \bP} = -\zeta(z)\bP^{-\top}\bF^\top D\widehat \We(\bF \bP^{-1}) \bP^{-\top} + D\Wp(\bP) = -\bP^{-\top}\bF^\top \bsigma + D\Wp(\bP),\\
\xi &= \frac{\partial W}{\partial z} = \zeta'(z) \widehat \We(\bF \bP^{-1}). \label{xi}
\end{align}
Here, $\bsigma$ is the \textit{first Piola-Kirchhoff stress tensor}, whereas $\bX$ and $\xi$ are the {\it thermodynamic driving forces} driving the evolution of the plastic strain $\bP$ and damage $z$, respectively.

Moving from \eqref{eq:neo_hooke_en}-\eqref{eq:hardening_en}, we readily compute that 
\begin{align*}
D\widehat{\We}(\bFe) &= \mu \bFe + \Gamma'(\det\bFe)\cof\bFe= \mu(\bFe-\bFe^{-\top}) +\lambda  (\det\bFe-1)\cof\bFe, \\
D\Wp(\bP) &= H(\bP-\bI), \\ 
\zeta'(z) &= 2(1-\zeta_0)z^+.
\end{align*}
Hence, the constitutive relations \eqref{stress}-\eqref{xi} can be specified as
\begin{align}
\bsigma &= \big(\zeta_0 + (1-\zeta_0)(z^+)^2\big) \left(\mu(\bFe-\bFe^{-\top}) +\lambda (\det\bFe-1)\cof\bFe \right) \bP^{-\top} , \label{stress1} \\
\bX &= -\bFe^{\top} \bsigma +H(\bP-\bI),\\
\xi &= \big(2(1-\zeta_0)z^+\big)\left( \frac{\mu}{2} |\bFe|^2 + \Gamma(\det\bFe) \right). \label{xi1}
\end{align}

\subsection{Dissipation} \label{sec:dissipation} 
Both damage and plasticization contribute to dissipation. These two dissipative mechanisms are described by the {\it dissipation potentials} $R_{\rm z}$ and $R_{\rm p}$, interacting via a positive and  monotone-increasing {\it dissipation-coupling} function $\rho: {\mathbb R}\to {\mathbb R}_+$ of damage. In particular, we prescribe the {\it total dissipation potential} to have the form
\begin{equation}
\label{eq:dissipation_plast_dam}
R(\bP,z,\dot \bP,\dot z) = R_{\rm z}(\dot z) + \rho(z) R_{\rm p}(\bP,\dot \bP).
\end{equation}
Note that the total dissipation potential depends on the current states $\bP$ and $z$, as well as on the rates $\dot \bP$ and $\dot z$.

The {\it plastic dissipation} potential $R_{\rm p}: \SL \times \RR^{d\times d} \to [0,\infty],$ is assumed to be positively 1-homogeneous in its second component, i.e. $R_{\rm p}(\bP,\lambda \dot \bP) = \lambda R_{\rm p}(\bP,\dot \bP)$ for all $\lambda \ge 0$. In addition, we require {\it plastic indifference}, namely that dissipation is independent from prior plasticization. This translates in asking $R_{\rm p}(\bP\bQ,\dot\bP\bQ)=R_{\rm p}(\bP,\dot \bP)$ for every $\bQ \in \SL$. These properties imply that 
$$R_{\rm p}(\bP,\dot \bP) = R_{\rm p}(\bI,\dot \bP\bP^{-1})=: \widehat R_{\rm p}(\dot \bP \bP^{-1})$$ 
for some given positively 1-homogeneous function $\widehat R_{\rm p}:\RR^{d\times d} \to [0,\infty]$. We choose $\widehat R_{\rm p}$ of Von Mises-type, namely,
\begin{equation}\label{mises}
\widehat R_{\rm p}(\bA) 
 = \begin{cases}
\sigma_{\rm p} |\bA|, & \text{if }\tr \bA = 0,\\
\infty, & \text{else,}
\end{cases}
\end{equation} 
where $\sigma_{\rm p} > 0$ denotes the {\it plastic yield stress}. The constraint on the trace entails that plastic evolution is isochoric: starting from an initial value $\bP_0 \in \SL$, the whole evolution takes place in $\SL$, for the space of trace-free matrices is the tangent space to $\SL$ at the identity. Therefore, at all times, plastic-strain rates are tangent to $\SL$. In the numerical simulations below the isochoric constraint is enforced by a more direct approach, see \eqref{eq:fes_plastic_strains} in Subsection~\ref{sec:FEM}.

As for the {\it damage dissipation potential} $R_{\rm z}: \RR \to [0,\infty]$ we let
\begin{equation}
\label{eq:damage_dissipation}
R_{\rm z}(\dot z):=\begin{cases}
\sigma_{\rm z} |\dot z| & \text{ if } \dot z \le 0, \\
\infty & \text{ else.}
\end{cases}
\end{equation}
The parameter $\sigma_{\rm z} >0$ can be understood as a {\it damage yield stress}. By setting $R_{\rm z}(\dot z)=\infty$ for $\dot z > 0$, damage is constrained to be {\it unidirectional}, i.e., no healing of the material is allowed. 

The monotone-increasing dissipation-coupling function $\rho$ is defined as 
\begin{equation}\label{def:rho}
\rho(z) :=
\rho_0+(1-\rho_0)(z^+)^2
\end{equation}
for some $\rho_0 \in (0,1]$. Referring to \eqref{eq:dissipation_plast_dam}-\eqref{mises}, one realizes that increasing damage induces a reduction the {\it effective} plastic yield stress 
\begin{equation}\label{effe}
\big(\rho_0+(1-\rho_0)(z^+)^2\big)\sigma_{\rm p}
\end{equation}
of the material. Note that the limiting case of {\it damage-free} finite plasticity can be included in this
description by letting $\zeta_0 = \rho_0 = 1$.

As damage is unidirectional, at all times $z$ is bounded from above by the initial damage $z_0\in [0,1]$. As such, the functions $\zeta$ and $\rho$ actually need to be specified on $(-\infty,1]$ only. Defining these functions on the whole real line and, in particular, assuming them to be constant on the semiline $(-\infty,0]$ is advantageous from the algorithmical viewpoint. Indeed, the constraint $z \in [0,1]$ is not directly imposed as on the one hand $z\leq 1$ follows from the monotonicity of $z$ in time and on the other hand $0\leq z$ ensues from the dissipation minimality, see Subsection \ref{sec:time-discrete-frame} below.

\subsection{Flow rules}\label{sec:flow}
The {\it internal} variables $\bP$ and $z$ are assumed to evolve according to classical normality rules. In particular, the evolution of the plastic strain $\bP$ is driven by the thermodynamic force $\bX$ and follows
\begin{equation}\label{force_balance_P}
-\bX \in \partial_{\dot \bP} R(\bP,z,\dot \bP,\dot z),
\end{equation}
where $\partial_{\dot \bP}$ denotes the subdifferential with respect to the rate $\dot \bP$. 

The damage parameter $z$ is driven by the thermodynamic force $\xi$ via the inclusion
\begin{equation}\label{force_balance_z}
-\xi \in \partial_{\dot z} R(\bP,z,\dot \bP,\dot z).
\end{equation}
The constitutive model is hence {\it associative}, for the evolution direction is determined by the gradient of the total dissipation potential with respect to rates.

In combination with the constitutive relations \eqref{stress}-\eqref{xi}, the flow rules \eqref{force_balance_P}-\eqref{force_balance_z} entail the thermodynamical consistency of model. Indeed, by assuming smoothness one readily checks the {\it Clausius-Duhem inequality}
\begin{align*}
  &\frac{\d}{\d t} W(\bF,\bP,z) - \bsigma: \dot \bF =
  \left(\frac{\partial W}{\partial \bF} - \bsigma\right):\dot \bF +
  \frac{\partial W}{\partial \bP}:\dot \bP + \frac{\partial
  W}{\partial z}\dot z \\&\qquad = \bX :\dot \bP + \xi \dot z =- R(\bP,z,\dot
  \bP,\dot z)\leq 0.
\end{align*}

\subsection{Final form of the constitutive material model}\label{sec:final}
Let us collect here the constitutive material model resulting from the constitutive relations \eqref{stress}-\eqref{xi} and the flow rules \eqref{force_balance_P}, \eqref{force_balance_z}, namely,
\begin{align}
  & \zeta(z) \left(\mu(\bFe-\bFe^{-\top}) +\lambda (\det\bFe-1)\cof\bFe \right) \bP^{-\top} = \bsigma, \label{stress0} 
\\
&\rho(z)\partial 
 \widehat R_{\rm p}( \dot \bP \bP^{-1})\bP^{-\top}
  -\bFe^{\top} \bsigma + H(\bP - \bI) \ni {\boldsymbol 0},\label{P0}
\\
&\partial R_{\rm z}(\dot z)+ \zeta'(z)\left( \frac{\mu}{2} |\bFe|^2 + \Gamma(\det\bFe) \right) \ni 0.\label{xi0}  
\end{align}
The system \eqref{stress0}-\eqref{xi0} has to be complemented with initial conditions for $\bP$ and $z$, namely 
\begin{equation}
\label{eq:init_cond}
(\bP(0),z(0)) = (\bP_{0}, z_0),  
\end{equation}
for some suitably given  initial values for plastic strain and damage $\bP_0 \in \SL$ and $z_0 \in [0,1]$.

\subsection{Time-incremental formulation}\label{sec:time-discrete-frame}
We comment now on the formulation of the model in discrete time, which in particular delivers a time-discrete approximation of the time-continuous model from Subsection~\ref{sec:final}.

Let $\bsigma: [0,T] \to \RR^{d\times d}$ be a prescribed stress history and  $ (\bP_{0}, z_0)\in \SL \times [0,1]$ be given. We consider a uniform partition $\Pi = \{ 0=t_0 < t_1 < \dots < t_{N-1} < t_N=T \}, N \in \NN, t_i=i\tau , \tau >0$, of the time interval $[0,T]$ (non-uniform partitions can be considered as well). Approximate solutions to \eqref{stress0}-\eqref{xi0} can be constructed by successively solving the following incremental minimization problems: 

Given $(\bP_{i-1}, z_{i-1})$, find
\begin{align}\label{inc_min_0D}
(\bF_i,\bP_i,z_i) \in \argmin \bigg\{ &\zeta(z)\widehat \We(\bF \bP^{-1}) + \Wp(\bP) - \bsigma_i:\bF \nonumber \\ &+R_{\rm z}(z-z_{i-1}) + \rho(z_{i-1}) \widehat R_{\rm p}(\bP \bP_{i-1}^{-1}-\bI) \bigg\},
\end{align}
where the minimum is taken over $(\bF,\bP,z) \in \RR^{d\times d} \times \SL \times [0,z_{i-1}]$ and $\bsigma_i := \bsigma(t_i)$. Indeed, a straight-forward calculation reveals that the Euler-Lagrange equations associated to the minimization problem \eqref{inc_min_0D} are given by
\begin{align}
  & \zeta(z_i) \left(\mu((\bFe)_i-(\bFe)_i^{-\top}) +\lambda (\det(\bFe)_i-1)\cof(\bFe)_i \right) \bP^{-\top}_i = \bsigma_i, \label{stress0d} 
\\
&   \rho(z_{i-1}) \partial \widehat R_{\rm p}(\bP_i\bP_{i-1}^{-1}-\bI)\bP^{-\top}_{i-1}
  -(\bFe)^\top_i \bsigma_i + H(\bP_i - \bI) \ni {\boldsymbol 0},\label{P0d}
\\
&\partial R_{\rm z}(z_i-z_{i-1})+ \zeta'(z_i)\left( \frac{\mu}{2} |(\bFe)_i|^2 + \Gamma(\det(\bFe)_i) \right) \ni 0.\label{xi0d}  
\end{align}
In particular, time derivatives in \eqref{stress0}-\eqref{xi0} are approximated by incremental quotients. All nonlinearities are evaluated implicitly, with the exception of the state dependencies in the dissipation potential $R(\bP,z,\dot \bP,\dot z)$. This is paramount to obtain the specific form of relations \eqref{P0d}-\eqref{xi0d}. 

The minimality in \eqref{inc_min_0D} yields $z_i\geq 0$ by induction. Let $z_{i-1}\geq 0$ and assume by contradiction that $z_i<0$. As $\zeta$ and $\rho$ are monotone in $z$ and constant on $(-\infty,0]$, so are the energy and the plastic dissipation. On the other hand, $R_{\rm z}(z_i - z_{i-1}) > R_{\rm z}(0-z_{i-1})$. This proves that $z_i <0$ cannot be a minimizer in \eqref{inc_min_0D}. Existence of minimizers of \eqref{inc_min_0D} is trivial since all terms involved in the minimization problem are lower semicontinuous and the state space $\RR^{d\times d} \times \SL \times [0,z_{i-1}]$ is closed.

\subsection{Assessment of model performance}\label{Subsec:ParameterAnalysis}
We illustrate now the performance of the model and the role of its various parameters. The incremental strategy of Subsection~\ref{sec:time-discrete-frame} is implemented by prescribing the stress history $t \mapsto \bsigma(t)$ and the initial conditions $\bP_0=\bI$ and $z_0 =1$ (undamaged) and solving (a suitably regularized version of) \eqref{inc_min_0D} via the Newton Method (see Subsection~\ref{Subsec:ngsolve}). 

The material  parameters are given in Table~\ref{tab:material_parameters} and are fixed throughout the rest of the paper if not specified otherwise. Parameters $E$, $\nu$, $H$, and $\sigma_{\rm p}$ relate to steel-like materials, see, e.g. \cite{ASTM19}. The Lam\'e parameters $\lambda$ and $\mu$ are computed via
$$\lambda =\frac{E\nu}{(1+\nu)(1-2\nu)},\quad\quad\, \mu =\frac{E}{2(1+\nu)}.$$  
\begin{table}[h!]
	\begin{tabular}{cccccccc}
		$E$ [GPa] & $\nu$ & $A$ & $\sigma_{\rm p}$ [MPa] & $H$ [MPa] & $\sigma_{\rm z}$ [MPa] & $\rho_0$ & $\zeta_0$ \\
		\hline 
		210 & 0.3 & 450 & 250 & 650 & $0.4$ & 0.5 & 0.5
	\end{tabular}
	\caption{Young's modulus, Poisson's ratio, maximal stress, plastic yield stress, hardening parameter, damage yield stress, and coupling parameters.}
	\label{tab:material_parameters}
\end{table}

\begin{figure}[h!]
	\centering
	\includegraphics[width=0.45\textwidth]{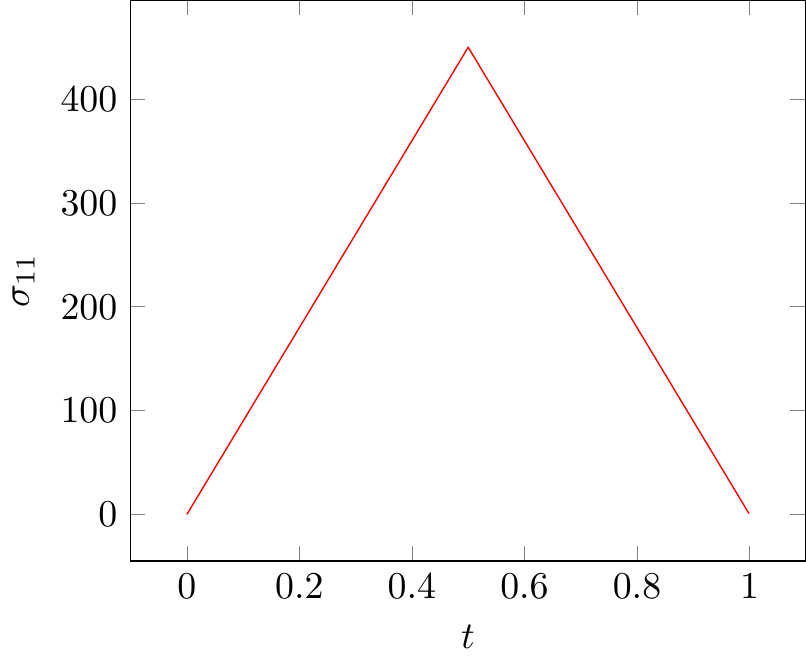}
	\includegraphics[width=0.45\textwidth]{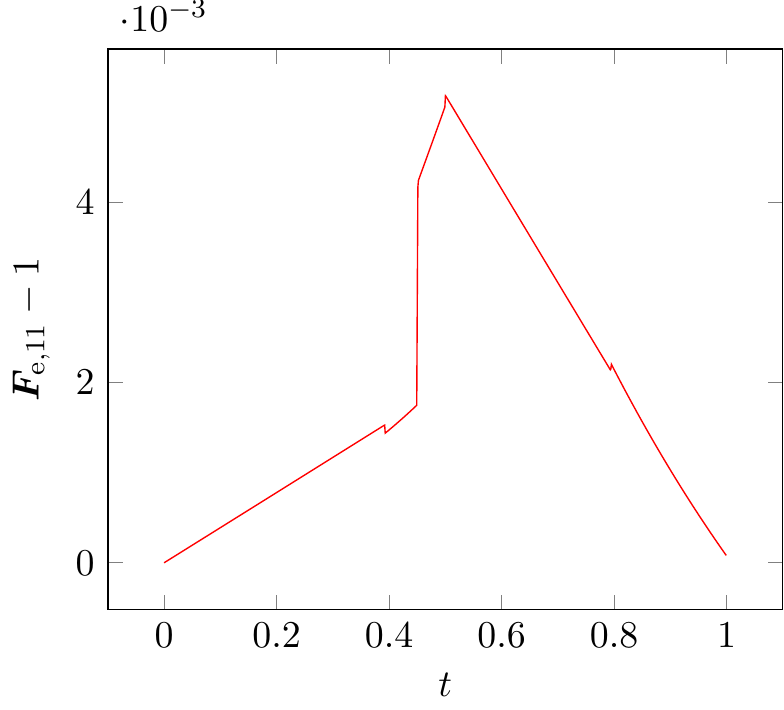}
	
	\includegraphics[width=0.45\textwidth]{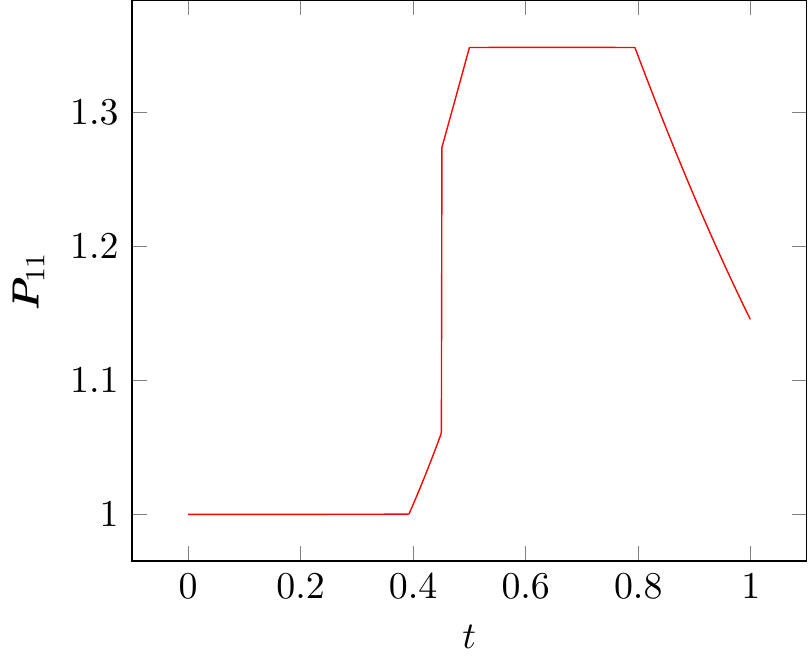}
	\includegraphics[width=0.45\textwidth]{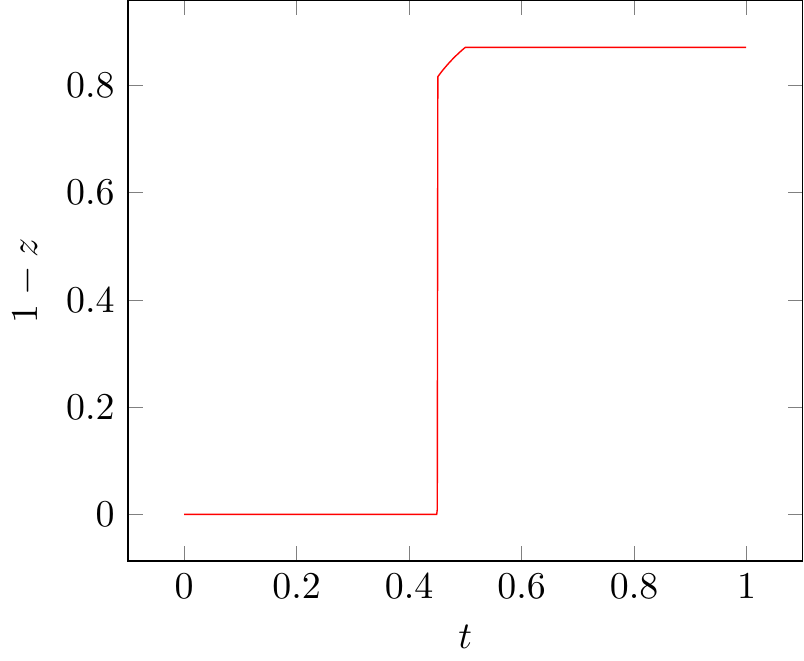}

	\caption{Evolution of stress (top left), elastic strain (top right), plastic strain (bottom left), and damage (bottom right) for the base parameters in Table~\ref{tab:material_parameters}.}
	\label{fig:res_0D_sigmaxx}
\end{figure}
\begin{figure}[h!]
	\centering
	\includegraphics[width=0.45\textwidth]{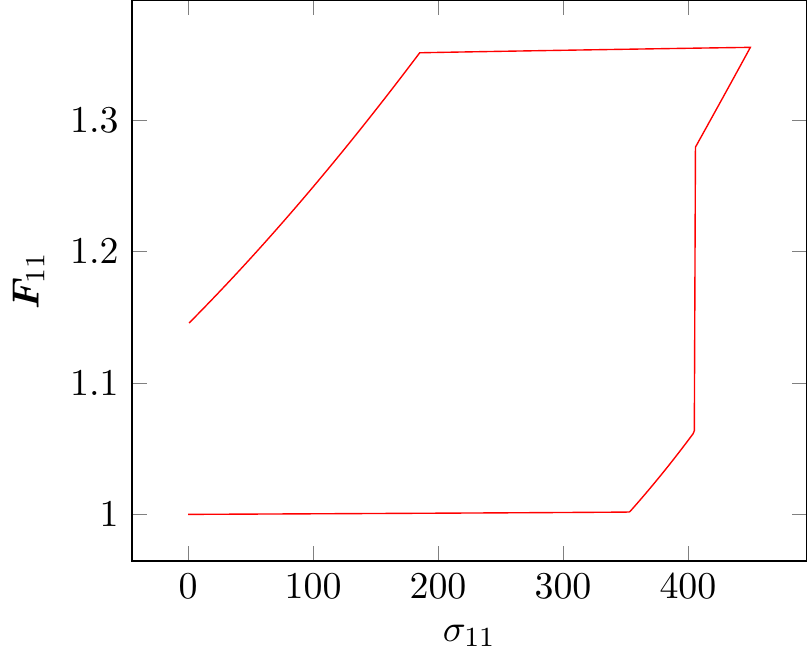}
	\includegraphics[width=0.45\textwidth]{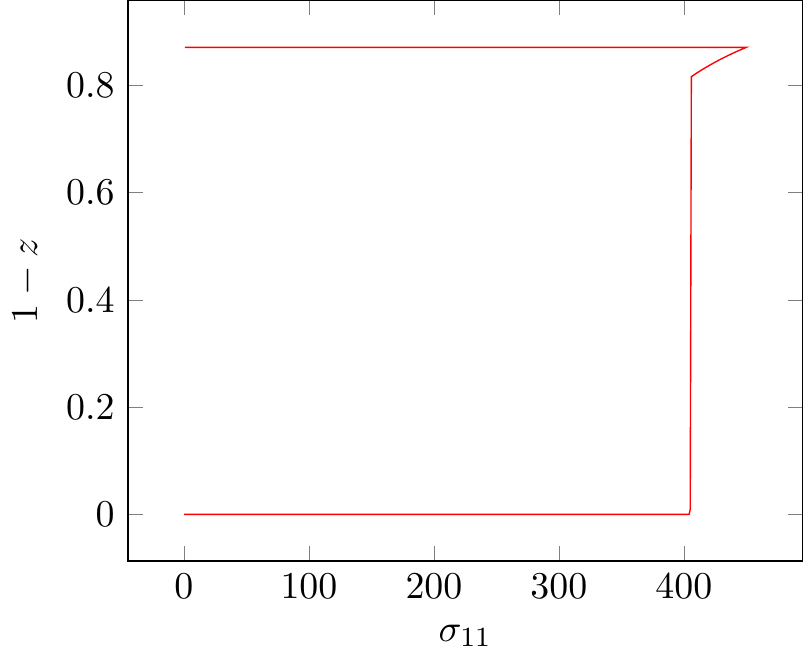}
	
	\caption{Stress-strain (left) and stress-damage curves (right) for the base parameters in Table~\ref{tab:material_parameters}.}
	\label{fig:res_0D_sigmaxx_1}
\end{figure}

We start by an uniaxial test and prescribe
\begin{equation*}
\bsigma(t) = \begin{pmatrix}
\sigma_{11}(t) & 0  \\
0 & 0   
\end{pmatrix}, \quad t\in [0,1],
\end{equation*}
where $\sigma_{11}(t)=A\arcsin(\sin(\pi t))/(\pi/2)$ and $A>0$ denotes the maximal stress. We choose the time step size to be $\tau = 10^{-4}$ for all simulations in this section. 

Under loading, the material behaves elastically  up to time $t=0.4$. Then, plasticization occurs without damage up to time $t=0.45$. At this point, the material damages and plasticizes simultaneously, first abruptly and then more regularly. Note that this fast plasticization is the effect of damage. Indeed, upon damaging the effective plastic yield stress \eqref{effe} drops and plasticization rapidly develops. By removing the load, damage is unrecovered and the material plasticizes again, for the effective plastic yield stress \eqref{effe} has lowered. 

Figures~\ref{fig:res_0D_sigmaxx} and \ref{fig:res_0D_sigmaxx_1} show the results for the choice of parameters in Table~\ref{tab:material_parameters}. In the following, we analytically discuss the qualitative behavior induced by changing these parameters.

\textit{Plastic yield stress.}
For smaller values of the plastic yield $\sigma_{\rm p}$, the plastic deformation is activated at smaller stresses. This behavior is observed both at loading and at unloading phase, see Figure~\ref{fig:res_0D_sigma_p}. As a consequence, damage gets activated earlier or later according to plasticity, since the thermodynamic damage force $\xi$ depends on the plastic strain through the elastic energy density $\widehat \We$, see equation \eqref{xi}.

\begin{figure}[h!]
	\centering
	\includegraphics[width=0.45\textwidth]{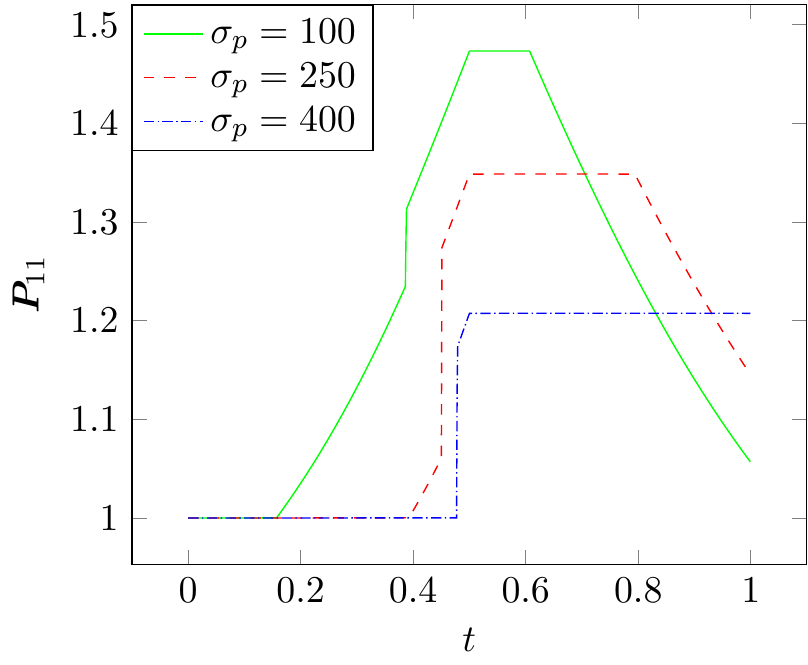}
	\includegraphics[width=0.45\textwidth]{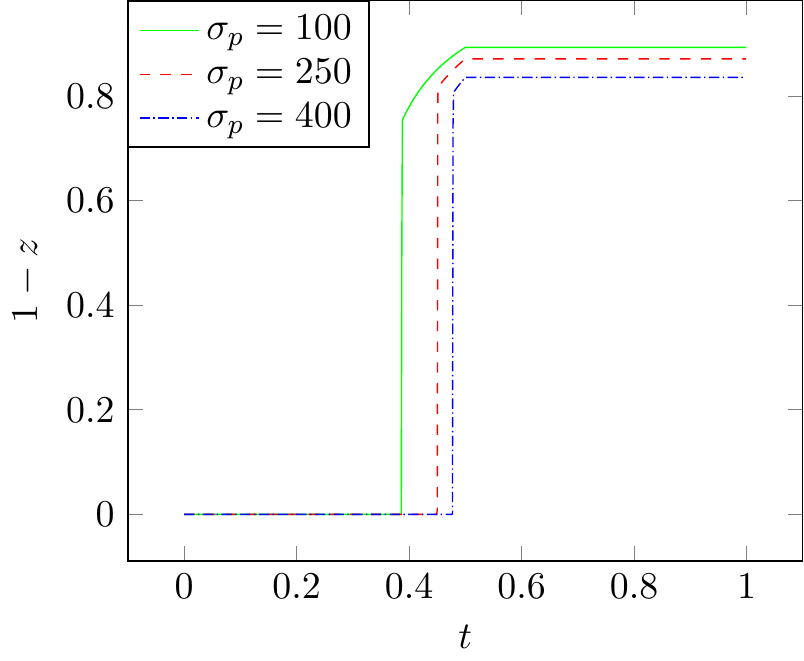}
	\caption{Effect of changing the plastic yield stress $\sigma_{\rm p}$. Plastic strain (left) and damage (right) for different choices of the parameter.}
	\label{fig:res_0D_sigma_p}
\end{figure}

\textit{Hardening.}
Variations of the kinematic hardening parameter $H$ lead to different slopes in the stress-strain curves once plastic deformation is activated. This behavior can be seen in Figure~\ref{fig:res_0D_hardening}. We remark that in all three examples plasticity is activated at the same stress level. Comparing the plastic strain for different hardening parameters, we see that these influence the maximal plastic strain. The damage curves for different $H$ are almost identical.

\begin{figure}[h!]
	\centering
	\includegraphics[width=0.45\textwidth]{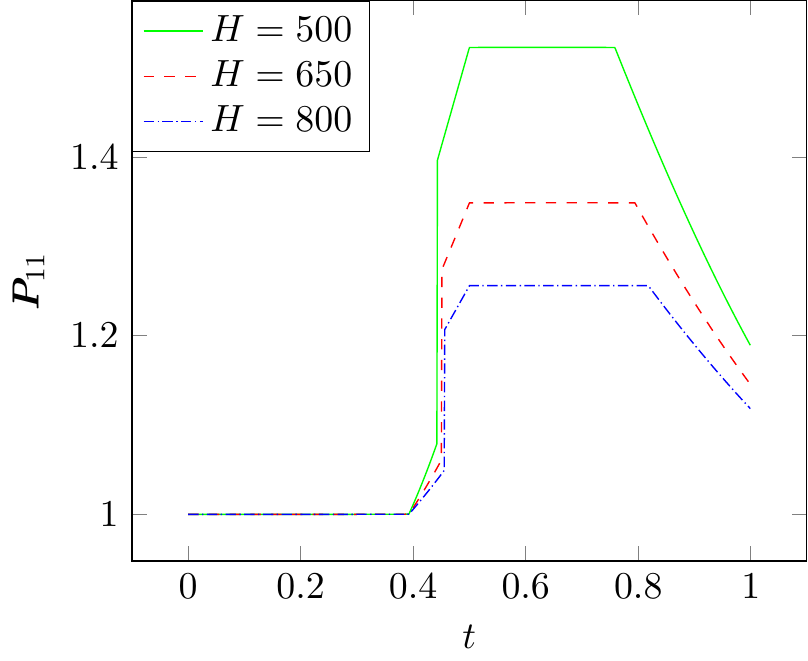}
	\includegraphics[width=0.45\textwidth]{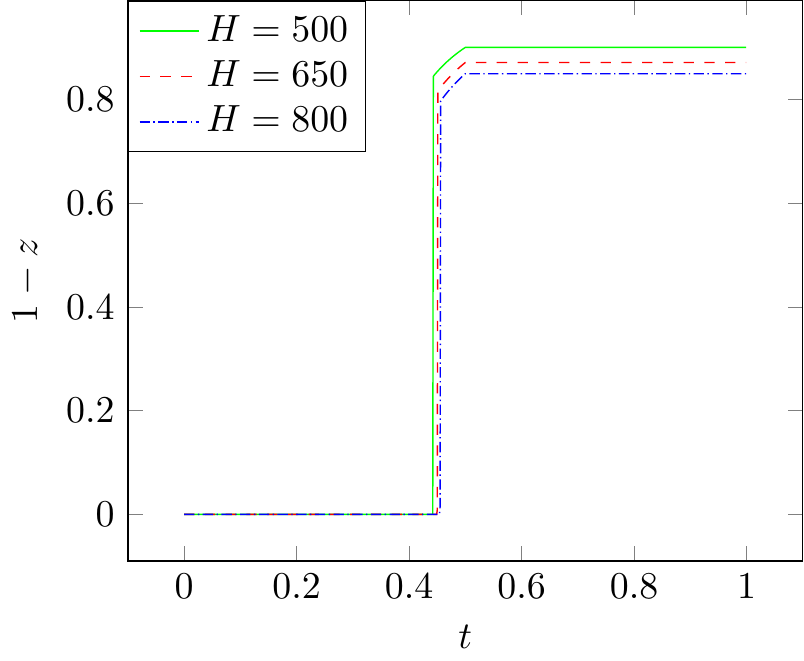}
	\caption{Effect of changing the hardening modulus $H$. Plastic strain (left) and damage (right) for different choices of the parameter.}
	\label{fig:res_0D_hardening}
\end{figure}

\textit{Damage yield stress.}
Smaller damage-yield stresses $\sigma_{\rm z}$ induce earlier material damage compared to our base example. Remarkably, the final plastic strain at time $t=1$ is almost the same for all three examples. Moreover, for a material that is fully damaged before plasticity is activated ($\sigma_z=0.01$), we observe a linear plasticization, see Figure~\ref{fig:res_0D_sigma_z}.
\begin{figure}[h!]
	\centering
	\includegraphics[width=0.45\textwidth]{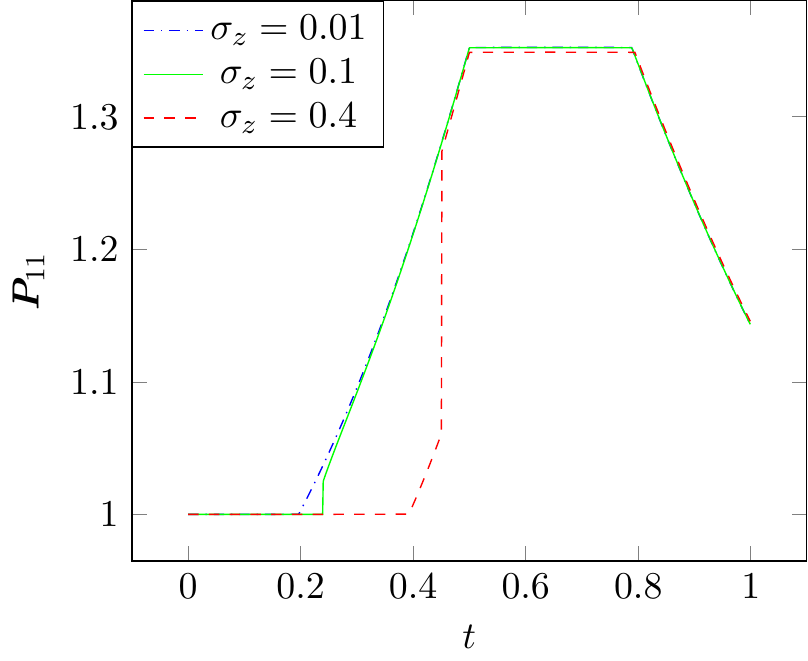}
	\includegraphics[width=0.45\textwidth]{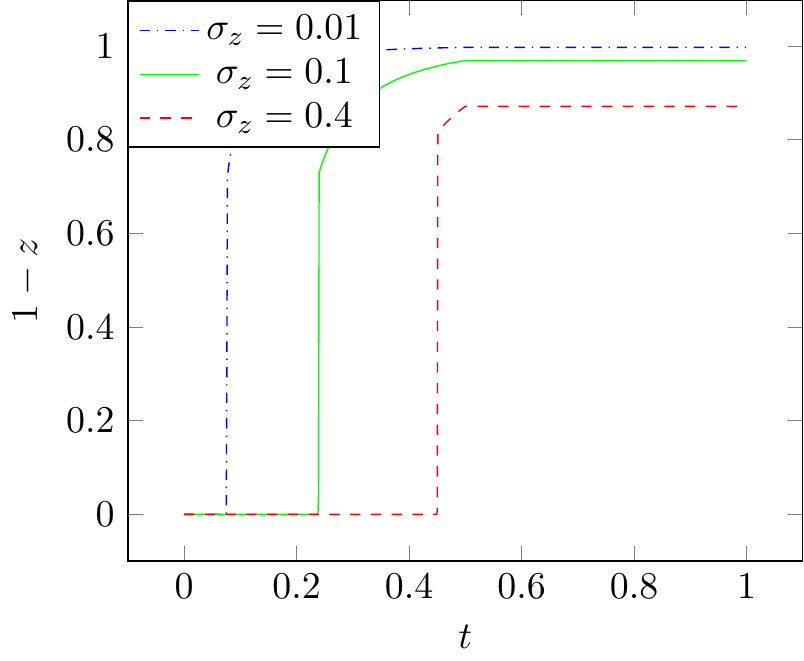}
	\caption{Effect of changing the damage yield stress $\sigma_{\rm z}$. Plastic strain (left) and damage (right) for different choices of the parameter.}
	\label{fig:res_0D_sigma_z}
\end{figure}

\textit{Dissipation-coupling parameter.}
For the smaller coupling parameter $\rho_0=0.1$, we observe a larger jump in the plastic strain, see Figure~\ref{fig:res_0D_rho}. This is due to fact that the smaller $\rho_0$ the easier it is for the material to deform plastically after it is damaged. We observe that without the coupling in the dissipation ($\rho_0=1$), no jump in the plastic-strain evolution occurs.
\begin{figure}[h!]
	\centering
	\includegraphics[width=0.45\textwidth]{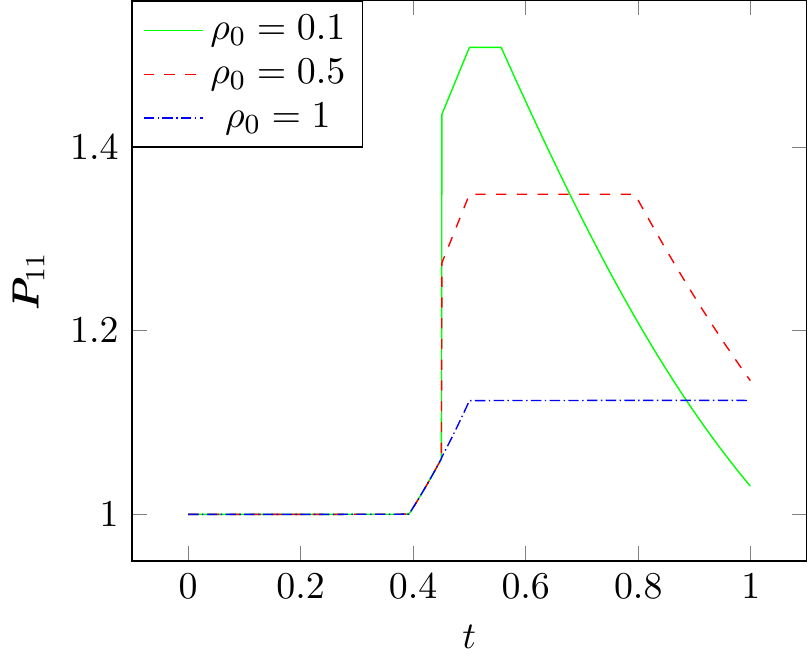}
	\includegraphics[width=0.45\textwidth]{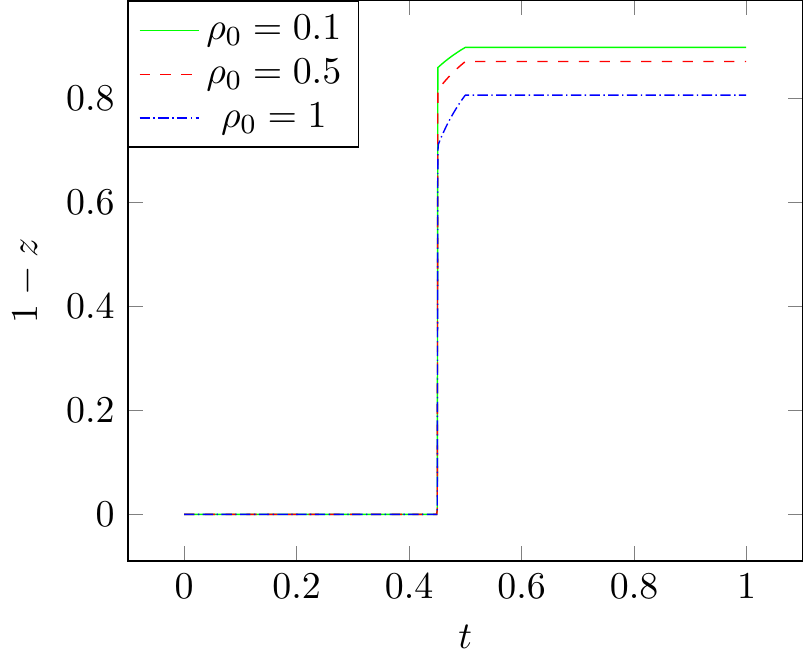}
	\caption{Effect of changing the dissipation-coupling parameter $\rho_0$. Plastic strain (left) and damage (right) for different choices of the parameter.}
	\label{fig:res_0D_rho}
\end{figure}

\textit{Energy-degradation parameter.}
The parameter $\zeta_0$ influences the activation onset of damage via the thermodynamic driving force $\xi$ in equation \eqref{xi}. Moreover, $\zeta$ changes the elastic properties making the material softer upon damage. Consequently, for smaller values of $\zeta_0$ the material reaches a larger elastic deformation, see Figure~\ref{fig:res_0D_zeta}.
\begin{figure}[h!]
	\centering
	\includegraphics[width=0.45\textwidth]{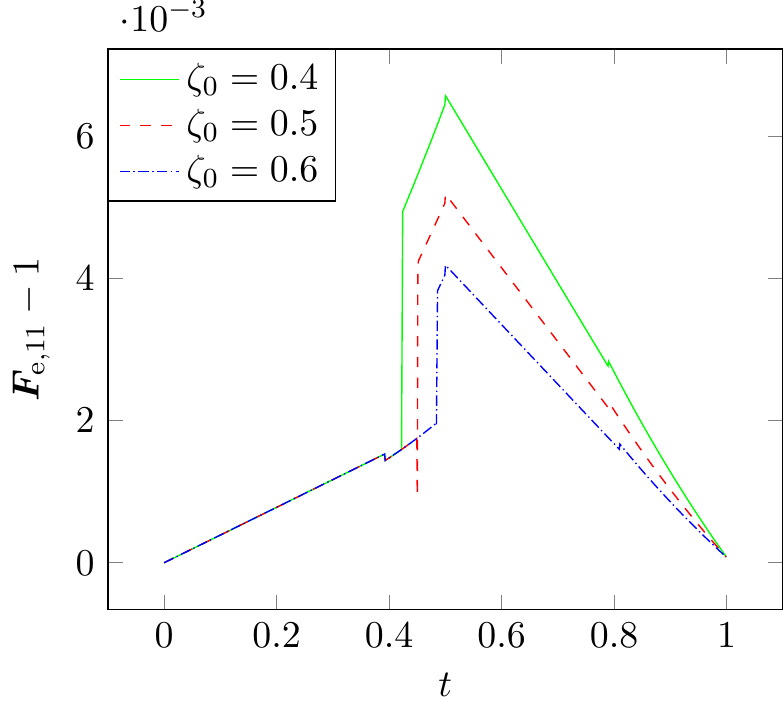}
	\includegraphics[width=0.45\textwidth]{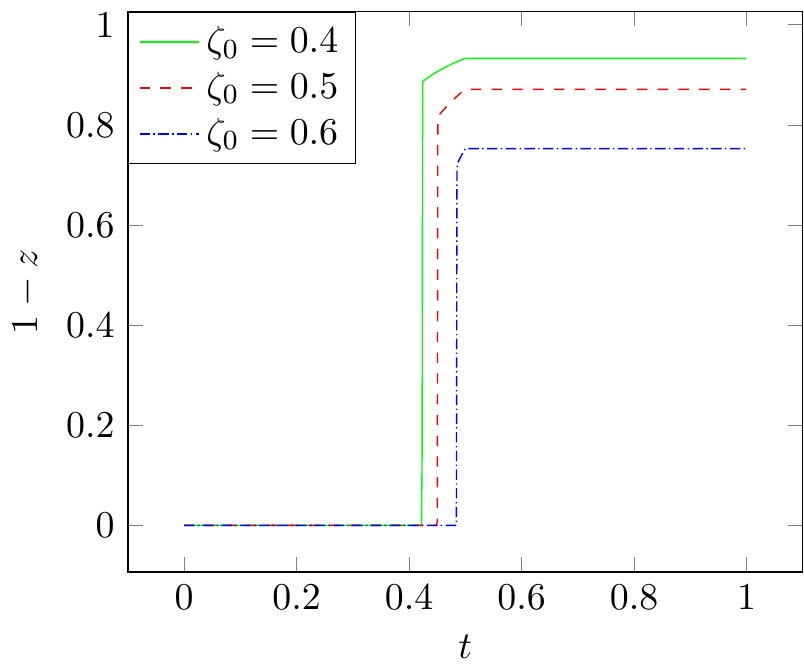}
	\caption{Effect of changing the energy-degradation parameter $\zeta_0$. Elastic strain (left) and damage (right) for different choices of the parameter.}
	\label{fig:res_0D_zeta}
\end{figure}

We close this section by showing a biaxial non-proportional square-shaped test in a plane-strain setting. The evolution of the elastic strain along a square-shaped loading history 
$$\bsigma(t) = \left(
\begin{matrix}
\sigma_{11}(t)&0  \\ 0& \sigma_{22}(t) 
\end{matrix}
\right)
$$
is reported in Figure~\ref{mp_0D_biaxial_square_stress}. The experiment is performed under the choice of parameters from Table~\ref{tab:material_parameters} with the exception of $\rho_0=\zeta_0=0$ (no damage) and with $E=4$ GPa.

\begin{figure}[h!]
	\centering
	\includegraphics[width=0.45\textwidth]{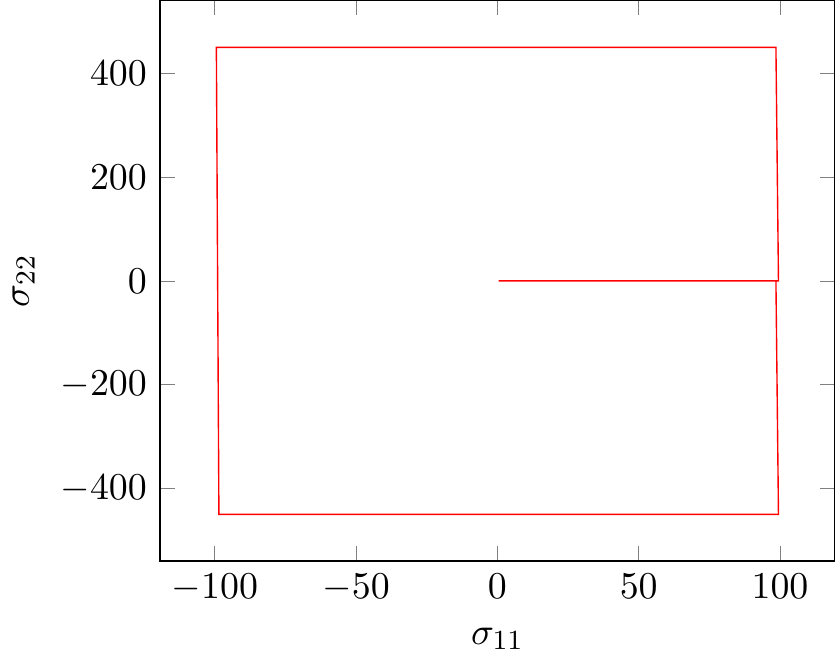}
	\includegraphics[width=0.45\textwidth]{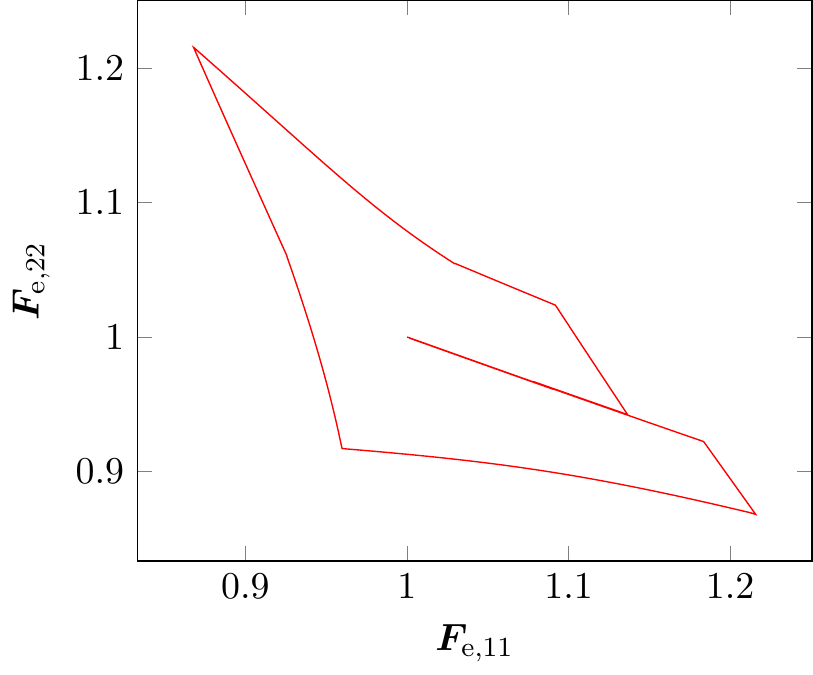}
	\caption{Stress history (left) and elastic strain (right) in a biaxial test.}
	\label{mp_0D_biaxial_square_stress}
\end{figure}

\subsection{Algorithmical aspects}\label{Subsec:ngsolve}
Newton's method calls for differentiability of all terms in the incremental minimization problem. Therefore, we regularize all non-differentiable terms appearing in \eqref{inc_min_0D}, as well as in \eqref{inc_min_3D} below. More precisely, we let $\varepsilon >0$ and replace $R_{\rm z}$ by the approximation
\begin{equation}\label{def:Psiepsilon}
R_{\rm z}^\varepsilon(\dot z) = \sigma_{\rm z}  \begin{cases}
-\dot z, &\text{if }\dot z < -\varepsilon,\\
-\dot z + \displaystyle \frac{(\dot z + \varepsilon)^3}{3\varepsilon^2}, &\text{else.}
\end{cases}
\end{equation}
satisfying $R_{\rm z}^\varepsilon \in C^2(\RR), (R_{\rm z}^\varepsilon)'(0)=0$. A simple calculation reveals
\begin{equation*}
\partial R_{\rm z}^\varepsilon(\dot z) = \begin{cases}
-\sigma_{\rm z}, &\text{if }\dot z < -\varepsilon,\\
-\sigma_{\rm z} + \displaystyle\frac{(\dot z + \varepsilon)^2}{\varepsilon^2}, &\text{else.}
\end{cases}
\end{equation*}
Secondly, we replace $\widehat R_{\rm p}$ by
$$\widehat R_{\rm p}^\varepsilon(\bA) = \sigma_{\rm p}\left(\sqrt{\bA:\bA + \varepsilon^2}-\varepsilon\right).$$
This leads to 
\begin{equation*}
\partial \widehat R_{\rm p}^\varepsilon(\bA) = \sigma_{\rm p} \frac{\bA}{\sqrt{\bA:\bA+\varepsilon^2}}.
\end{equation*}

In order to choose a proper regularization parameter, we study results for the base example in Subsection~\ref{Subsec:ParameterAnalysis} (Table~\ref{tab:material_parameters}) with different choices of $\varepsilon$. In Figure~\ref{fig:res_0D_eps} one can see the stress-strain curves and elastic strain curves for the values of $\varepsilon=10^{-4},10^{-5},10^{-6},10^{-7}$. We observe that for larger $\varepsilon$ the qualitative and quantitative behavior highly differs from the one with smaller $\varepsilon$ since the regularization pollutes the real solution. For  $\varepsilon\leq 10^{-7}$ trajectories are almost coinciding. As Newton's minimization gets more unstable for smaller $\varepsilon$, we fix $\varepsilon=10^{-7}$ for all our computations in Subsection~\ref{Subsec:ParameterAnalysis}, as well as in Section~\ref{Sec:Examples} below. 
\begin{figure}[h!]
	\centering
	\includegraphics[width=0.45\textwidth]{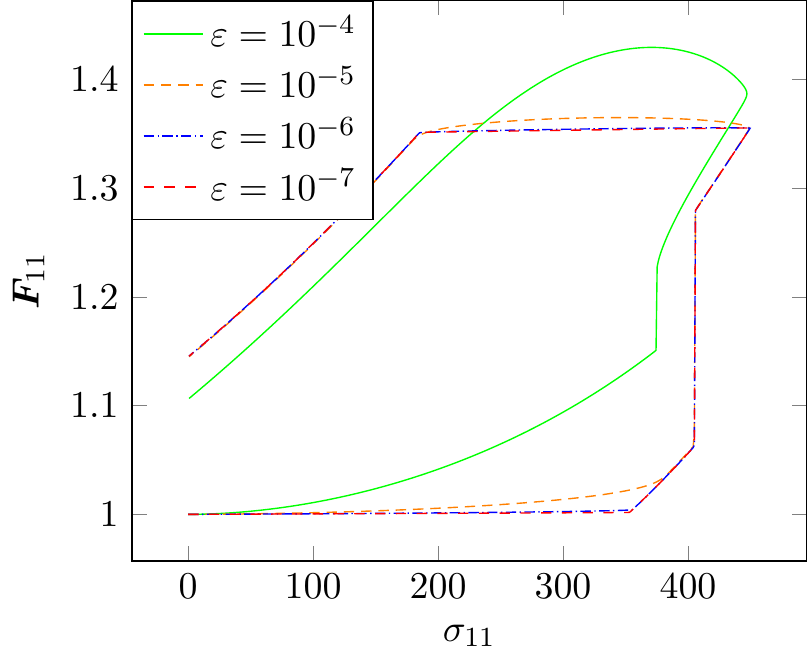}
	\includegraphics[width=0.45\textwidth]{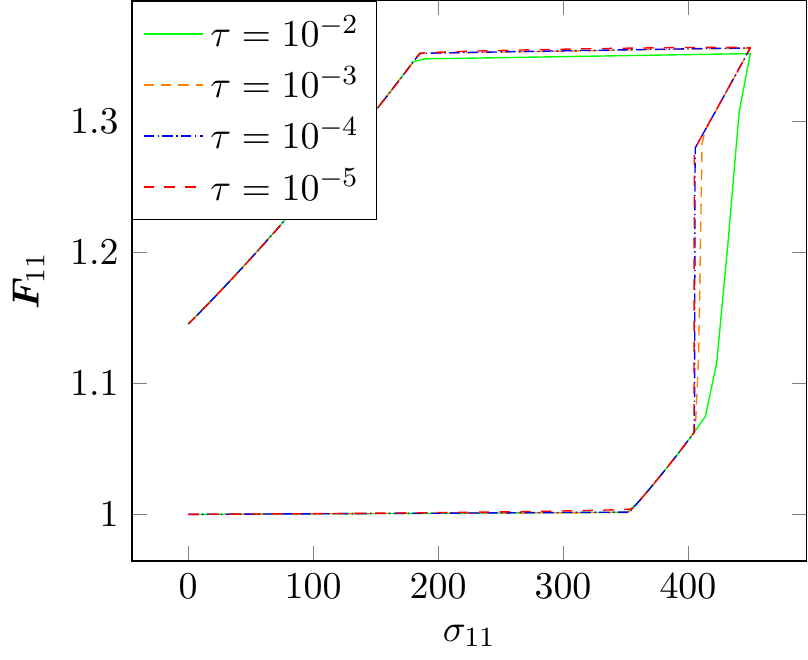}
	\caption{Stress-strain curves for $\tau=10^{-4}$ and different values of $\varepsilon$ (left) and $\varepsilon=10^{-7}$ and different values of $\tau$ (right) computed on the base example in Subsection~\ref{Subsec:ParameterAnalysis}.}
	\label{fig:res_0D_eps}
\end{figure}

In Figure~\ref{fig:res_0D_eps} we display results for the same base example (with $\varepsilon=10^{-7}$) for different values of the time step size, namely $\tau=10^{-2}, 10^{-3}, 10^{-4}, 10^{-5}$. We observe very inaccurate results for $\tau=10^{-2}$ which improve fast for smaller values of $\tau$. It seems that for values $\tau \le 10^{-4}$ the solutions are for all purposes indistinguishable. However, computation time grows linearly in $1/\tau$. For this reason, we choose $\tau=10^{-4}$ whenever possible. For some of the examples in Section~\ref{Sec:Examples} it is however necessary to choose an even smaller time step to guarantee convergence of Newton's algorithm.

\section{Quasistatic evolution}\label{sec:quasi}

We now combine the constitutive model \eqref{stress0}-\eqref{xi0} with the quasistatic equilibrium system and with boundary conditions. To this aim, we introduce the {\it reference configuration} of the body as $\Om\subset \RR^d $, being a bounded, open, connected set with a Lipschitz boundary $\partial \Om$. We denote by $\bx \in \Om$ the generic point in the reference configuration.

\subsection{Equilibrium system}
The {\it deformation} of the body is described by a map $\bvarphi: [0,T]\times \Om \to \RR^d$. We assume $\partial \Om$ to be split into relatively open {\it Dirichlet} and {\it Neumann} parts, denoted by $\Gamma_D$ and $\Gamma_N$, respectively, such that $\partial \Om = \overline{\Gamma_D} \cup \overline{\Gamma_N}$ (closure in $\partial \Om$), where $\Gamma_D$ has positive surface measure. On $\Gamma_D$ we prescribe the boundary condition $$\bvarphi = \bphi_{\rm Dir} \quad \text{on }[0,T] \times \Gamma_D,$$ where $\bphi_{\rm Dir}:[0,T]\times \Gamma_D \to \RR^d$ is given and represents a possibly time-dependent, prescribed boundary deformation.
On $\Gamma_N$, a surface-force density (traction) $\bg:[0,T]\times \Gamma_N \to \RR^d$ is prescribed in form of a Neumann boundary condition, see \eqref{Neumann}. Moreover, the body may be subjected to a bulk-force density $\bforce: [0,T]\times \Om \to \RR^d$. 

Taking into account the body force $\bforce$ and the traction force $\bg$, the 1st Piola-Kirchhoff stress tensor $\bsigma$ in \eqref{stress} is required to satisfy the {\it quasistatic momentum balance}
\begin{align}
-\nabla \cdot \bsigma &= \bforce \quad \text{in }(0,T) \times \Om,\\
\bsigma \cdot \bn &= \bg \quad \text{in }(0,T) \times \Gamma_N, \label{Neumann}
\end{align}
where $\bn = \bn(\bx)$ denotes the outward normal to $\partial \Om$, which is almost everywhere well-defined.

\subsection{Regularization}
In the quasistatic setting, the problem can be regularized by augmenting the energy by second-length-scale terms featuring gradients of the internal variables. This would guarantee strong compactness of the internal states $\bP$ and $z$ and would allow to show existence of suitable weak solutions, see \cite{MSZ19}. In order to move in this direction, one has to replace $\bX$ and $\xi$ by 
\begin{align}
\tilde \bX &= \bX - \mup \Delta \bP, \\
\tilde \xi &= \xi - \muz \Delta z,
\end{align}
where $\Delta$ denotes the Laplace operator and the scale parameters $\mup,\muz \geq 0$ are given.

\subsection{Incremental minimization}
Following the blueprint of Subsection~\ref{sec:time-discrete-frame}, we address a uniform discretization of the time interval $[0,T]$ with step-size $\tau>0$ (non-uniform time steps can be considered as well). At every time step the {\it quasistatic} incremental minimization problem reads as follows:

Given $(\bP_{i-1},z_{i-1}): \Om \to \SL \times [0,1]$ and $\boldsymbol{f}_i := \boldsymbol{f}(t_i), \ \boldsymbol{g}_i := \boldsymbol{g}(t_i)$, find
\begin{align}\label{inc_min_3D}
(\bvarphi_i,\bP_i,z_i) \in \argmin &\bigg\{ \int_\Om
  \zeta(z(\bx))\widehat \We(\bnabla \bvarphi(\bx)(\bP(\bx))^{-1}) +
  \Wp(\bP(\bx))\d \bx \nonumber\\
&
+ \int_\Omega \frac{\muz}{2}|\nabla z(\bx)|^{2} + \frac{\mup}{2}|\nabla \bP(\bx)|^2 \d \bx \nonumber \\ 
&- \int_{\Om} \boldsymbol{f}_i(\bx)\cdot \bvarphi(\bx) \d \bx - \int_{\Gamma_N} \bg_i(\bx) \cdot \bvarphi(\bx) \d S(\bx) \nonumber \\
&+ \int_{\Om} R(z(\bx)-z_{i-1}(\bx)) \d \bx \nonumber\\
&+\int_{\Om} \rho(z_{i-1}(\bx))
  R_{\rm p}(\bP_{i-1}(\bx), \bP(\bx)- \bP_{i-1}(\bx)) \d \bx \bigg\}.
\end{align}
More precisely, as the minimization involves gradient terms, the minimum is properly taken in the space of Sobolev functions $\bvarphi \in H^1(\Om;\RR^d)$ satisfying $\bvarphi = \bphi_{\rm Dir}$ on $\Gamma_D$, $\bP \in H^1(\Om;\RR^{d\times d})$ with $\bP \in \SL$ almost everywhere, and $z \in H^1(\Om;\RR)$ with $z \in [0,1]$ almost everywhere.

\subsection{Existence}\label{Subsec:existence}
 We present now an existence result for  problem
\eqref{inc_min_3D}. The proof is based on the  Direct Method of the
Calculus of Variations, hinging upon weak lower
semicontinuity, coercivity of the driving functional, and closure of
the state space. In Theorem
\ref{thm:exist_2d}, we present a statement in two dimensions, under the exact
modeling  assumptions introduced above. In three space
dimensions, existence follows by slightly strengthening the
assumptions on the elastic response,  see Remark \ref{rem:exist_3d}.

\begin{theorem}[Existence for $d=2$]\label{thm:exist_2d}
Let $(\bP_{\rm old},z_{\rm old}) \in H^1(\Om; \RR^{2\times 2} \times
\RR)$ with $\bP_{\rm old} \in \mathrm{SL}(2)$ and $z_{\rm old} \in
[0,1]$ almost everywhere. Further, let $\bforce \in
L^{ 2}(\Om;\RR^2), \bg \in L^{ 2}(\Gamma_N;\RR^2)$ and $\mup>0, \muz \ge 0$. Assume that there exists $\widehat \bvarphi_{\rm Dir} \in H^1(\RR^2;\RR^2)$ such that $\widehat \bvarphi_{\rm Dir}=\bvarphi_{\rm Dir}$ on $\Gamma_D$. 
We define the functional 
\begin{align}\label{functional}
\mathcal F(\bvarphi,\bP,z) &= \int_\Om
  \zeta(z(\bx))\widehat \We(\bnabla \bvarphi(\bx)(\bP(\bx))^{-1}) +
  \Wp(\bP(\bx))\d \bx \nonumber\\
&
+ \int_\Omega \frac{\muz}{2}|\nabla z(\bx)|^{2} + \frac{\mup}{2}|\nabla \bP(\bx)|^2 \d \bx \nonumber \\ 
&- \int_{\Om} \boldsymbol{f}(\bx)\cdot \bvarphi(\bx) \d \bx - \int_{\Gamma_N} \bg(\bx) \cdot \bvarphi(\bx) \d S(\bx) \nonumber \\
&+ \int_{\Om} R(\bP_{\rm old}(\bx),z_{\rm old}(\bx),\bP(\bx)-\bP_{\rm old}(\bx),z(\bx)-z_{\rm old}(\bx))\d \bx
\end{align}
and the state space
$$\mathcal A = \bigg\{ (\bvarphi,\bP,z) \in H^1(\Om;\RR^2 \times \RR^{2\times 2} \times \RR) : \bvarphi = \bvarphi_{\rm Dir} \text{ on } \Gamma_D, \bP \in \mathrm{SL}(2), z \in [0,1] \text{ a.e.} \bigg\}.$$
Then $\mathcal F$ attains a minimizer on $\mathcal A$.
\end{theorem}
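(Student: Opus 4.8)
The plan is to apply the Direct Method of the Calculus of Variations. First I would check that $\mathcal A \ne \emptyset$: the affine-type datum $\widehat\bvarphi_{\rm Dir}$ together with the admissible pair $(\bP_{\rm old},z_{\rm old})$ (which lies in $H^1$ with $\bP_{\rm old}\in\mathrm{SL}(2)$, $z_{\rm old}\in[0,1]$ a.e.) furnishes a competitor, so the infimum $m:=\inf_{\mathcal A}\mathcal F$ is finite or $+\infty$; since the competitor has finite energy (note $\widehat\We$ is finite on $\mathrm{GL}$, $\Wp$ is finite on $K$ which contains a neighbourhood of $\bI$ — one must choose the competitor's $\bP$ to land in $K$, e.g. $\bP\equiv\bI$ if $\bP_{\rm old}$ is not available, but $\bP_{\rm old}(\bx)\in\mathrm{SL}(2)$ need not lie in $K$; this is a minor point I would address by taking the competitor $\bP\equiv\bI\in K$, $z\equiv z_{\rm old}$), we get $m<\infty$. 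Then I would take a minimizing sequence $(\bvarphi_n,\bP_n,z_n)$.

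\emph{Compactness / coercivity.} The key is to extract a weakly convergent subsequence in $H^1$. For $\bP_n$: the gradient term $\tfrac{\mup}{2}\int|\nabla\bP_n|^2$ with $\mup>0$ bounds $\|\nabla\bP_n\|_{L^2}$, and the constraint $\bP_n\in K$ a.e. with \eqref{conscomp} gives $\|\bP_n\|_{L^\infty}\le c_K$, hence $\bP_n$ is bounded in $H^1(\Om;\RR^{2\times2})$; pass to $\bP_n\wto\bP$ in $H^1$, and by Rellich $\bP_n\to\bP$ strongly in $L^2$ and a.e. (subsequence). For $z_n$: $0\le z_n\le 1$ a.e. gives an $L^\infty$ bound; if $\muz>0$ the gradient term gives $H^1$-boundedness directly, while if $\muz=0$ one only has $z_n$ bounded in $L^\infty$ — here I would exploit that $R_{\rm z}(z_n-z_{\rm old})=\sigma_{\rm z}|z_n-z_{\rm old}|$ (finite only if $z_n\le z_{\rm old}$) gives an $L^1$ bound on $z_n-z_{\rm old}$, which combined with the pointwise bound suffices for weak-$*$ $L^\infty$ (or $L^2$) convergence $z_n\wto z$; in fact since the dissipation is the only $z$-control when $\muz=0$, convergence a.e. along a subsequence can be arranged via the BV-type bound on $z_n$ or simply by noting that the admissible set $\{z:0\le z\le z_{\rm old}\text{ a.e.}\}$ is weakly-$*$ closed and $\mathcal F$ restricted to it is l.s.c.\ in the relevant topology. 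For $\bvarphi_n$: coercivity in $\bvarphi$ comes from the elastic energy. Since $\zeta(z)\ge\zeta_0>0$ and $\widehat\We(\bFe)\ge\tfrac\mu2|\bFe|^2-C$ (the $\Gamma$ term is bounded below), we get $\int\zeta(z_n)\widehat\We(\bnabla\bvarphi_n\bP_n^{-1})\ge c\int|\bnabla\bvarphi_n\bP_n^{-1}|^2-C$; using $|\bnabla\bvarphi_n|=|(\bnabla\bvarphi_n\bP_n^{-1})\bP_n|\le|\bnabla\bvarphi_n\bP_n^{-1}|\,c_K$ this controls $\|\bnabla\bvarphi_n\|_{L^2}$, and with the Dirichlet condition on $\Gamma_D$ (positive measure) Poincaré's inequality controls $\|\bvarphi_n\|_{H^1}$; the force terms are handled by Young's inequality since $\bforce\in L^2$, $\bg\in L^2$ and the trace theorem. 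So $\bvarphi_n\wto\bvarphi$ in $H^1$.

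\emph{Closedness of $\mathcal A$ and lower semicontinuity.} The Dirichlet constraint passes to the weak $H^1$-limit since the trace is weakly continuous; $z\in[0,1]$ a.e.\ and $\bP\in\mathrm{SL}(2)$ a.e.\ pass because $\bP_n\to\bP$ a.e.\ and $\det$ is continuous, and $0\le z_n\le1$ passes to the a.e.\ limit (or weak-$*$ limit). For l.s.c.: the gradient terms $\int\tfrac{\muz}2|\nabla z|^2+\tfrac{\mup}2|\nabla\bP|^2$ are convex in the gradients, hence weakly l.s.c.; the force terms are weakly continuous; the dissipation term $\int R(\bP_{\rm old},z_{\rm old},\bP-\bP_{\rm old},z-z_{\rm old})$ is l.s.c.\ because $R$ is convex and l.s.c.\ in $(\dot\bP,\dot z)$ (being a sum of the convex $1$-homogeneous $R_{\rm z}$ and $\rho(z_{\rm old})R_{\rm p}$) and the integrand converges a.e.; the hardening term $\int\Wp(\bP_n)\to\int\Wp(\bP)$ by a.e.\ convergence, the $\bP_n\in K$ bound, and Fatou (or dominated convergence plus l.s.c.\ of the indicator). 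The genuinely delicate term is $\int\zeta(z_n)\widehat\We(\bnabla\bvarphi_n\bP_n^{-1})$: here I would combine the strong (a.e.) convergence $z_n\to z$, $\bP_n^{-1}\to\bP^{-1}$ (the latter from $\bP_n\to\bP$ a.e.\ together with $\det\bP_n=1$ and the $L^\infty$ bound, so the inverse is bounded and converges a.e.) with the weak $L^2$ convergence of $\bnabla\bvarphi_n$, writing $\bnabla\bvarphi_n\bP_n^{-1}=\bnabla\bvarphi_n\bP^{-1}+\bnabla\bvarphi_n(\bP_n^{-1}-\bP^{-1})$ where the second term $\to0$ strongly in $L^1$ (product of $L^2$-bounded and $L^\infty$ a.e.-null, by a Vitali/Egorov argument), so that $\bnabla\bvarphi_n\bP_n^{-1}\wto\bnabla\bvarphi\bP^{-1}$ in $L^1$; then since $\bFe\mapsto\zeta(z(\bx))\widehat\We(\bFe)$ is convex in $\bFe$ (Neo-Hookean with $\lambda\ge-d\mu/2$ — the term $\tfrac\mu2|\bFe|^2$ is convex, $-\mu\ln\det$ is convex on $\mathrm{GL}^+$, and $\tfrac\lambda2(\det-1)^2$ requires care but the stated bound $\lambda\ge-d\mu/2$ is exactly what makes the sum convex, as in standard compressible Neo-Hookean analysis) and l.s.c., Ioffe's theorem (l.s.c.\ of convex integrands under weak $L^1$ convergence of the argument and a.e.\ convergence of the $\bx$-dependence) yields the desired inequality.

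\emph{Conclusion and main obstacle.} Collecting: $\mathcal F(\bvarphi,\bP,z)\le\liminf_n\mathcal F(\bvarphi_n,\bP_n,z_n)=m$, and $(\bvarphi,\bP,z)\in\mathcal A$, so it is a minimizer. I expect the main obstacle to be the lower semicontinuity of the coupled elastic term $\int\zeta(z_n)\widehat\We(\bnabla\bvarphi_n\bP_n^{-1})$: one must correctly orchestrate which variables converge strongly (namely $z_n$ and $\bP_n$, via the $\mup>0$ regularization and the $K$-constraint) and which only weakly ($\bnabla\bvarphi_n$), verify convexity of the compressible Neo-Hookean density in $\bFe$ under the precise Lamé restriction $\lambda\ge-d\mu/2$, and handle the composition $\bnabla\bvarphi_n\bP_n^{-1}$ carefully so that Ioffe-type l.s.c.\ applies. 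A secondary subtlety is the case $\muz=0$, where $z_n$ is only controlled in $L^\infty\cap(\text{dissipation})$ rather than $H^1$; here one relies on the monotone constraint $z_n\le z_{\rm old}$ and weak-$*$ closedness, and notes that $\zeta,\rho$ being continuous and the elastic term being continuous in $z$ make the weaker convergence of $z_n$ still sufficient (indeed $\zeta(z_n)\widehat\We(\cdots)\ge0$ and one can use Fatou after passing to an a.e.\ convergent subsequence extracted from the $L^1$-bound on $z_{\rm old}-z_n$).
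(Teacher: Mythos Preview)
Your overall architecture (Direct Method, coercivity via $\zeta\ge\zeta_0$ and the $K$-bound on $\bP$, compactness of $\bP_n$ from $\mup>0$, closedness of $\mathcal A$) matches the paper's. The genuine gap is in the lower-semicontinuity step for the elastic term: your claim that $\bFe\mapsto\widehat\We(\bFe)$ is \emph{convex} is false. The map $\bFe\mapsto-\ln\det\bFe$ is convex on symmetric positive-definite matrices, but not on $\GLp$: its second variation is $\tr\big((\bFe^{-1}H)^2\big)$, which is negative e.g.\ at $\bFe=\tfrac12\bI$ with $H=\left(\begin{smallmatrix}0&1\\-1&0\end{smallmatrix}\right)$; adding $\tfrac\mu2|\bFe|^2$ does not repair this once $\det\bFe<1$, and the $(\det-1)^2$ term (itself nonconvex in $\bFe$) does not help either. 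The bound $\lambda\ge-d\mu/2$ ensures positivity of the linearized elasticity tensor at $\bFe=\bI$, not global convexity. Hence Ioffe's theorem applied to $\bFe\mapsto\zeta(z)\widehat\We(\bFe)$ is not available, and your argument breaks precisely at the point you flagged as delicate.

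What the paper does instead is exploit \emph{polyconvexity}: one writes $\widehat\We(\bFe)=\mathbb W_{\rm conv}(\bFe,\det\bFe)$ with $\mathbb W_{\rm conv}(\bA,\delta)=\tfrac\mu2|\bA|^2+\Gamma(\delta)$ jointly convex in $(\bA,\delta)\in\RR^5$. Lower semicontinuity then requires, in addition to the weak convergence $\bnabla\bvarphi_n\bP_n^{-1}\wto\bnabla\bvarphi\bP^{-1}$ that you established, the weak $L^1$ convergence of the determinants $\det(\bnabla\bvarphi_n\bP_n^{-1})=\det\bnabla\bvarphi_n\to\det\bnabla\bvarphi$. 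This last convergence is the weak continuity of the Jacobian for $W^{1,d}$ maps (Ball/Reshetnyak, or a div--curl argument), and it is exactly here that the hypothesis $d=2$ with $\bvarphi_n$ bounded in $H^1=W^{1,2}$ is used; in three dimensions one would need $W^{1,3}$ control, which motivates the Ogden-type strengthening in Remark~\ref{rem:exist_3d}. Once both $\bnabla\bvarphi_n\bP_n^{-1}$ and its determinant converge weakly, convexity of $\mathbb W_{\rm conv}$ together with the strong (a.e.) convergence of $z_n$ and $\bP_n$ gives the liminf inequality you want.
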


\begin{proof}  Within this proof, we use the symbol $C$ for a 
  generic positive constant depending on data, possibly changing from
   line to line.

 Let us start by checking the coercivity of $\mathcal F$. We  use the Poincar\'e inequality and the fact that $\bforce \in
L^{ 2}(\Om;\RR^2)$ and $ \bg \in L^{ 2}(\Gamma_N;\RR^2)$ in order to
estimate the loading  terms as
\begin{align}
 \left|\int_{\Om} \boldsymbol{f}(\bx)\cdot \bvarphi(\bx) \d \bx + \int_{\Gamma_N} \bg(\bx) \cdot \bvarphi(\bx) \d S(\bx)\right|
 \le C\|\bnabla\bvarphi\|_{L^{ 2} }. \label{est:1}
\end{align}
 We can estimate the above right-hand side by Young's inequality  and the uniform bound
\eqref{conscomp} as   
\begin{equation}\label{est:2}
\|\bnabla\bvarphi\|_{L^{ 2} } \le  \frac{ \zeta_0\mu}{2}  \|\bnabla
\bvarphi \bP^{-1}\|_{L^2 }^2 + \frac{1}{ 2 \zeta_0\mu }
\|\bP\|_{L^{ \infty}}^2  \le \frac{ \zeta_0\mu}{2}  \|\bnabla
\bvarphi \bP^{-1}\|_{L^2}^2 + \frac{ c_K^2}{
  2 \zeta_0\mu } ,
\end{equation}
 where $\mu$ is the Lam\'e parameter. 
As $ \widehat{\We}(\bFe) \ge \mu  |\bFe|^2/2  - C$,  by  the uniform bound \eqref{conscomp},  estimates \eqref{est:1}-\eqref{est:2}, and $\zeta \ge \zeta_0>0$, we deduce
\begin{align*}
\mathcal F(\varphi,\bP,z) & \ge  \frac{\zeta_0\mu}{2} 
                            \|\bnabla\bvarphi \bP^{-1}\|_{L^2}^2 + \mup
                            \|\bnabla\bP\|_{L^2}^2 + \muz \|\nabla
                            z\|_{L^2}^2 -  C \|\bnabla\bvarphi
                            \|_{L^2}   - C \\
& \ge \|\bnabla\bvarphi
                            \|_{L^2}^2
                        -
                            \frac{c_K^2}{2 \zeta_0\mu}  + \mup
                            \|\bnabla\bP\|_{L^2}^2 + \muz \|\nabla
                            z\|_{L^2}^2 -  C  \|\bnabla\bvarphi
                            \|_{L^2}  - C \\
& \ge  \frac12\|\bnabla\bvarphi
                            \|_{L^2}^2
+ \mup
                            \|\bnabla\bP\|_{L^2}^2 + \muz \|\nabla
                            z\|_{L^2}^2 -  C.
\end{align*}
 We hence have that 
\begin{equation}
\|\bnabla\bvarphi\|_{L^2 }^2 +
\|\bP\|_{L^\infty }^2+    
\|z\|_{L^\infty }^2+   \mup
\|\bnabla\bP\|_{L^2 }^2 + \muz \|\nabla
z\|_{L^2 }^2 \le  C(\mathcal F(\bvarphi,\bP,z)+1),  \label{coe}
\end{equation}
where  the uniform bound on $\bP$ comes from \eqref{conscomp}
and that  on $z$  follows as  $(\bvarphi,\bP,z) \in \mathcal A$ implies $z \in [0,1]$.

 Note that, under the assumptions on $\bvarphi_{\rm Dir}$, the set
$\mathcal A$ of admissible states is nonempty.  Thus, we may take an infimizing sequence $(\bvarphi_n,\bP_n,z_n) \in \mathcal A$, namely,
$$\lim_{n\to \infty} \mathcal F(\bvarphi_n,\bP_n,z_n) = \inf_\mathcal A \mathcal F > -\infty.$$
In particular, $\mathcal F(\bvarphi_n,\bP_n,z_n) $  is bounded,
independently of $n$. 
By the coercivity property \eqref{coe}, we can take a (not-relabeled)
subsequence such that  $(\bvarphi_n,\bP_n,z_n) \to
(\bvarphi,\bP,z)$ weakly in $H^1$.  As the sequence $\bvarphi_n$
satisfies the determinant constraint $\det \bnabla\bvarphi_n > 0$, we
have that $\det\bnabla \bvarphi_n \to \det\bnabla \bvarphi$ weakly in
$L^1(\Om;\RR)$ \cite[Prop. 3.2.4]{Kruzik-Roubicek}.  In particular
$\bvarphi=\bvarphi_{\rm Dir}$ on $\Gamma_D$.
At the same
time,   $\det \bP_n \to \det \bP$  strongly  in $L^1(\Om)$,  implying
$\det \bP=1$. Further,  as $z_n \in [0,1]$ almost everywhere, we
have that $z \in [0,1]$ almost everywhere as well. Hence
$(\bvarphi,\bP,z) \in \mathcal A$,  which is indeed closed in the
weak topology of $H^1$. 

The above convergences entail in particular that $\bnabla \bvarphi_n
\bP^{-1}_n \to \bnabla \bvarphi
\bP^{-1}$ weakly in $W^{1,q}$ for all $q<2$ and we have that $\det (\bnabla \bvarphi_n
\bP^{-1}_n) = \det \bnabla \bvarphi_n \to \det \bnabla \bvarphi = \det (\bnabla \bvarphi
\bP^{-1})$ weakly in $L^1$. Observe now that the elastic energy is polyconvex \cite{Ball76},
namely, we can write $\widehat{\We}$
\begin{equation*}
\widehat{\We}(\bFe) = \mathbb W_{\rm conv}(\bFe,\det\bFe) := \frac{\mu}{2}|\bFe|^2 + \Gamma(\det\bFe),
\end{equation*}
where $\mathbb W_{\rm conv}: \RR^{5} \to \RR$ convex. The map
$(\bP,z) \mapsto R(\bP_{\rm old},z_{\rm old},\bP-\bP_{\rm
  old},z-z_{\rm old})$ is convex as well. Hence, one has that
\begin{equation*}
\inf_{\mathcal A} \mathcal F \le \mathcal F(\bvarphi,\bP,z) \le \liminf_{n\to \infty} \mathcal F(\bvarphi_n,\bP_n,z_n) = \inf_{\mathcal A} \mathcal F.
\end{equation*}
Thus, $(\bvarphi,\bP,z)$ is a minimizer of $\mathcal F$.
\end{proof}

\begin{remark}[Existence for $d \ge 3$]\label{rem:exist_3d}
 A crucial step on the above existence proof is the use of 
\cite[Prop. 3.2.4]{Kruzik-Roubicek} entailing convergence of the
determinant of the deformation. This requires 
$\bvarphi$ in $W^{1,d}(\Om;\RR^d)$, where $d$ is the space
dimension. In the three dimensional case, one hence needs to 
strengthen the coercivity assumptions on $\We$ in order to get such
integrability from the boundedness of the energy.   This can be
accomplished by using the {\it Ogden}-type elasticity law \cite[Sec. 2.4]{Kruzik-Roubicek} 
\begin{equation}
\label{coercivityWe}
\We(\bFe) = c|\bFe|^{\qe} + \Psi(\det \bFe),
\end{equation}
where $\qe \ge d, c=\mu/\qe d^{(\qe-2)/2}$, and $\Psi(\delta)=-\mu\ln(\delta)+\lambda(\delta-1)^2/2-\mu d/\qe$.  In the three dimensional simulation in Subsection
\ref{sec:hole} we used  $\qe = 4$.
\end{remark}

As shown in \cite{MSZ19}, as the time step size $\tau$ tends to zero, incremental solutions converge to trajectories fulfilling  the energy balance for all time, as well as a so-called global stability condition. These two properties qualify such time continuous trajectories as {\it energetic solutions} \cite{MR15,MT04} of the quasistatic evolution problem.

\subsection{Numerical implementation}\label{Sec:Numerics}

In the following, we present a series of numerical tests for the quasistatic incremental problem \eqref{inc_min_3D} both in two and three space dimensions. All simulations are performed with the finite element library NETGEN/NGSolve\footnote{www.ngsolve.org} \cite{Sch97,Sch14}. As NGSolve supports symbolic energy formulations and automatic exact differentiation, we can directly use the formulations \eqref{inc_min_0D} and \eqref{inc_min_3D} together with Newton's method to solve the incremental problems. We mention that the therein arising linearized problems are symmetric. Note in particular that there is no need to compute variations of the energy by hand. Again, we regularize potentials as detailed in Subsection~\ref{Subsec:ngsolve}.

\subsection{Space discretization}\label{sec:FEM}
For the spatial discretization we implement a finite element method \cite{Bra13,Zienk00}. Let $\mathcal{T}$ be a shape-regular triangulation of the body $\Omega\subset\mathbb{R}^d$and let $\mathcal{P}^k(\mathcal{T})$ be the set of piecewise polynomials up to order $k$. We define the following conforming finite element spaces
\begin{align*}
& V^k:= \mathcal{P}^k(\mathcal{T}) \cap C^0(\Omega)\subset \Hone[\Omega],\\
& Q^k:= \mathcal{P}^k(\mathcal{T})\subset \Ltwo[\Omega].
\end{align*}
In the following, we use the decomposition
\begin{equation*}
\bvarphi(\bx)=\bx+\bu(\bx),
\end{equation*} 
where $\bu: [0,T] \times \Om \to \RR^d$ is the displacement. All minimization problems are stated and solved in terms of the displacement field $\bu$ rather than the deformation $\bvarphi$. The displacement $\bu$ and damage $z$ are discretized with $\Hone$-conforming Lagrangian elements of order $k$, i.e.
\begin{align*}
& \bu\in [V^k]^d,\qquad z\in V^{k}.
\end{align*}
For the plastic strain, components are discretized by $\Ltwo$-conforming elements of one polynomial order less than that of the displacement. Here, to satisfy the isochoric condition $\det \bP = 1$ pointwise, in two dimensions the plastic strain is imposed to have the structure
\begin{align}
\label{eq:fes_plastic_strains}
& \bP:= \mat{p_{11} & \,\,\,p_{12} \\ p_{21} & (1+p_{12}p_{21})/p_{11}}\in\text{SL}(2),\qquad p_{11}, p_{12},p_{21}\in Q^{k-1}, \ p_{11}\not =0.
\end{align}
In three dimensions we prescribe
\begin{align}
\label{eq:fes_plastic_strains_3D}
& \bP:= \mat{p_{11} & p_{12} & p_{13} \\ p_{21} & p_{22} & p_{23} \\
p_{31} & p_{32} & p_{33}}\in \text{SL}(3), \qquad p_{ij} \in Q^{k-1}, \ p_{11}p_{22}\not =p_{21}p_{12},
\end{align}
with
\begin{equation}
p_{33} :=\frac{1+p_{23}(p_{11}p_{32}-p_{12}p_{31})+p_{13}(p_{31}p_{22}-p_{21}p_{32})}{p_{11}p_{22}-p_{21}p_{12}}.
\end{equation}
Note that this parametrization of the plastic strain $\bP$ is not global, as no global parametrization $\RR^{d^2-1} \to \SL$ is available. However, in the frame of our approximation we compute $\bP =
\delta\bP\bP_{\rm old}$ with increments $\delta\bP \in \SL$. In fact, by choosing the time step $\tau$
small, we expect and check that $\delta\bP $ stays close to the
identity. As the parametrizations
\eqref{eq:fes_plastic_strains}-\eqref{eq:fes_plastic_strains_3D} are well defined around the identity matrix we use $\delta\bP$ as unknown instead of $\bP$.

\section{Simulations}\label{Sec:Examples}
In the following we present several numerical experiments for the quasistatic problem \eqref{inc_min_3D} in 2D and 3D. We record also code snippets in order to emphasize the flexibility of the NGSolve implementation.
\begin{table}[h!]
	\begin{tabular}{cccc}
		$\sigma_{\rm z}$ [MPa] & $\rho_0$ & $\zeta_0$ & $\muz$ \\
		\hline 
		$2/3$ & 0.1 & 0.5 & $10^{-4}$
	\end{tabular}
\caption{Damage yield stress, damage coupling parameters, and damage length-scale coefficient used for numerical examples.}
\label{tab:material_parameters_2D}
\end{table}
The material parameters for the following examples are chosen as in Table~\ref{tab:material_parameters_2D}.
The Young's modulus $E$, the Poisson's ratio $\nu$, the plastic yield stress $\sigma_{\rm p}$, and the hardening parameter $H$ are the same in Table~\ref{tab:material_parameters}.

\subsection{2D plate with hole without damage}\label{Subsec:example_without_dam}
In the first 2D example, a square-shaped plate with a circular hole at $m=(m_x,m_y)=(1/4,3/4)$ in the middle of the upper left quarter, see Figure~\ref{fig:plate_hole_geometry}, is fixed on the left side of its boundary and a time-dependent traction force $\boldsymbol{g}$ is applied on the right side. The maximal applied force is $g_{\rm max} = 340$. In this example we neglect damage ($z \equiv 1$). The geometry parameters can be found in Table~\ref{tab:plate_hole_geom_par}.

\begin{table}[h!]
	\begin{tabular}{ccccc}
		$l$ [m] & $b$ [m] & $r$ [m] & $m_x$ [m] & $m_y$ [m]\\
		\hline 
		1 & 1 & 0.1 & 0.25 & 0.75
	\end{tabular}
	\caption{Geometry parameters for 2D plate with hole.}
	\label{tab:plate_hole_geom_par}
\end{table}

\begin{figure}[h!]
	\centering
	\includegraphics[width=0.275\textwidth]{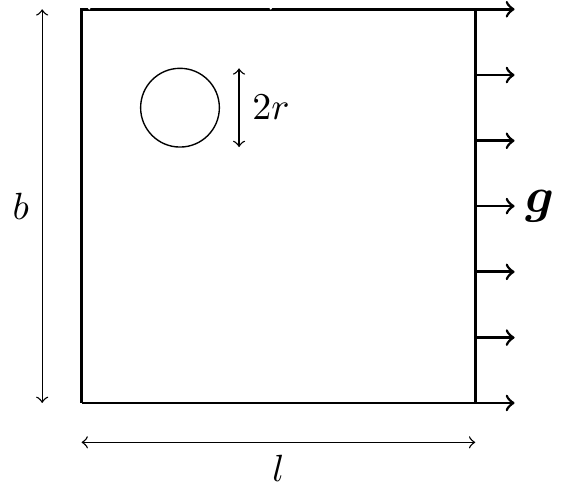}
	\caption{Geometry for plate with hole.}
	\label{fig:plate_hole_geometry}
\end{figure}

\lstinputlisting[language=Python, firstline=4, lastline=8,caption={Utility functions.},label=lst:utlityfct]{plate_asym_hole_f1.py}

In Listing~\ref{lst:utlityfct} one can see utility functions for the regularized Frobenius scalar product and the material law of Neo--Hooke \eqref{eq:neo_hooke_en}. The latter is given in terms of the deformation gradient $\bF$ and the Lam{\'e} coefficients $\mu$ and $\lambda$, cf. \eqref{eq:neo_hooke_en}.

\lstinputlisting[language=Python, firstline=57, lastline=65, caption={Finite element spaces and trial functions for finite plasticity in 2D.},label=lst:fesplast]{plate_asym_hole_f1.py}

In Listing~\ref{lst:fesplast} the finite element spaces and the corresponding trial functions used for the symbolic formulation are defined. In Appendix~\ref{Sec:gen_meshes}, examples of mesh generation with NETGEN/NGSolve in two and three dimensions can be found. The components of the displacement $\bu=(u_1,u_2)$ belong to $\Hone$-conforming finite element spaces of a given polynomial order
$k$. As mentioned in Subsection~\ref{sec:FEM}, the plastic strains are approximated in a $\Ltwo$-conforming finite element space with one polynomial order less. In line 4 of Listing~\ref{lst:fesplast} a so-called compound finite element space with three copies of $\Ltwo$ is created. This is needed for the plastic strains having three independent components in 2D. In line 5 a compound finite element space with the displacements and again three copies of $\Ltwo$-space is defined and in line 7 the \texttt{TrialFunctions} for the displacements and the (multiplicative) plastic update are created. The plastic strain $\delta \bP$ is defined in line 8 according to equation \eqref{eq:fes_plastic_strains}. Notice that matrices in NGSolve are defined row-wise.

\lstinputlisting[language=Python, firstline=66, lastline=75, caption={\texttt{GridFunctions} and \texttt{CoefficientFunctions} for finite plasticity in 2D.},label=lst:gfcfplast]{plate_asym_hole_f1.py}

To save the results, so-called \texttt{GridFunctions} are used, containing the finite element coefficient vectors, see lines 2-4 in Listing~\ref{lst:gfcfplast}.
Again, it is useful to work with matrices and to define the new plastic strain $\bP$ in terms of the update $\delta \bP$ (\texttt{dP} in the code) and the old plastic strains $\bP_{\rm old}$ (\texttt{P\_old} in the code). Note, that \texttt{cf\_P} is a \texttt{GridFunction} object, whereas \texttt{P} is a \texttt{TrialFunction} used in the variational formulation.

\lstinputlisting[language=Python, firstline=78,
lastline=80,caption={Symbolic definition of tensors.},label=lst:symbdefplast]{plate_asym_hole_f1.py}

Definitions of the deformation gradient $\bF$ gives
strain $$\bFe = \nabla \bvarphi \bP^{-1} = (\bI + \nabla
\bu)\bP^{-1} = (\bI + \nabla \bu)(\cof \bP)^\top,$$ where we used the relation $\bP^{-1} = (\cof \bP)^\top$ from $\det \bP =1$, see Listing~\ref{lst:symbdefplast}.

\lstinputlisting[language=Python, firstline=81,
lastline=86,caption={Bilinear form for finite plasticity.},label=lst:blfplast]{plate_asym_hole_f1.py}

In Listing~\ref{lst:blfplast} we define a \texttt{BilinearForm} on the compound finite element space \texttt{fes} implementing minimization problem \eqref{inc_min_3D} (without damage). The resulting matrices during Newton iterations are symmetric, saving memory space.
In lines 3-5 volume (\texttt{dx}) and boundary (\texttt{ds}) energy integrators are added, where \texttt{CoefficientFunctions} and \texttt{TrialFunctions} can be freely combined. The \texttt{definedon} flag is used to prescribe a traction force on the right boundary. The \texttt{force} is also a \texttt{CoefficientFunction} and a parameter \texttt{factor} is used to implement an increasing or decreasing external force $\bg = (g_1,0)$ in time. 

\lstinputlisting[language=Python, firstline=54, lastline=55]{plate_asym_hole_f1.py}

\lstinputlisting[language=Python, firstline=87, lastline=88,caption={Optimizations in the bilinear form.},label=lst:optblf]{plate_asym_hole_f1.py}

Some further improvements can be implemented, cf. Listing~\ref{lst:optblf}. First, all degrees of freedom regarding the plastic strains are discontinuous and can be eliminated at the element level. Therefore, the flag \texttt{condense} is used, prescribing the \texttt{BilinearForm} to compute the corresponding Schur complement and harmonic extension for the static condensation. Note that adding symbolic expressions to an integrator is practical, but a complicated expression tree can slow down the assembling process. A use of the \texttt{Compile} function as in line 2 of Listing~\ref{lst:optblf} optimizes the expression tree.
\begin{align*}
\text{\texttt{Compile(truecompile=False, wait=False)}}
\end{align*}
Setting the first argument to \texttt{True}, the Python code gets precompiled and then linked. Thus, the effectiveness of the generated code is comparable with integrators written directly in C++.

\lstinputlisting[language=Python, firstline=101, lastline=126, caption={Time loop and update process for finite plasticity.},label=lst:timeloopplast]{plate_asym_hole_f1.py}

\begin{figure}[h!]
	\centering
	\includegraphics[width=0.49\textwidth]{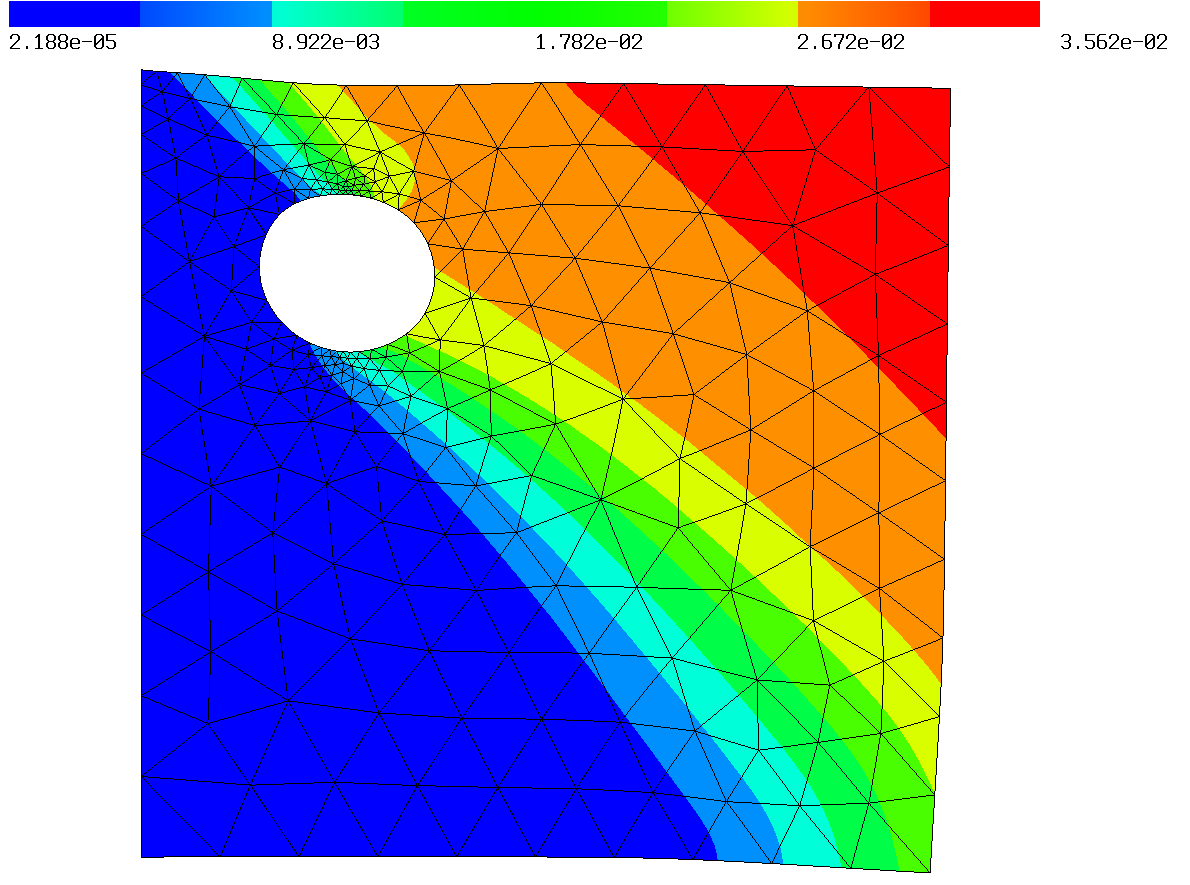}
	\includegraphics[width=0.49\textwidth]{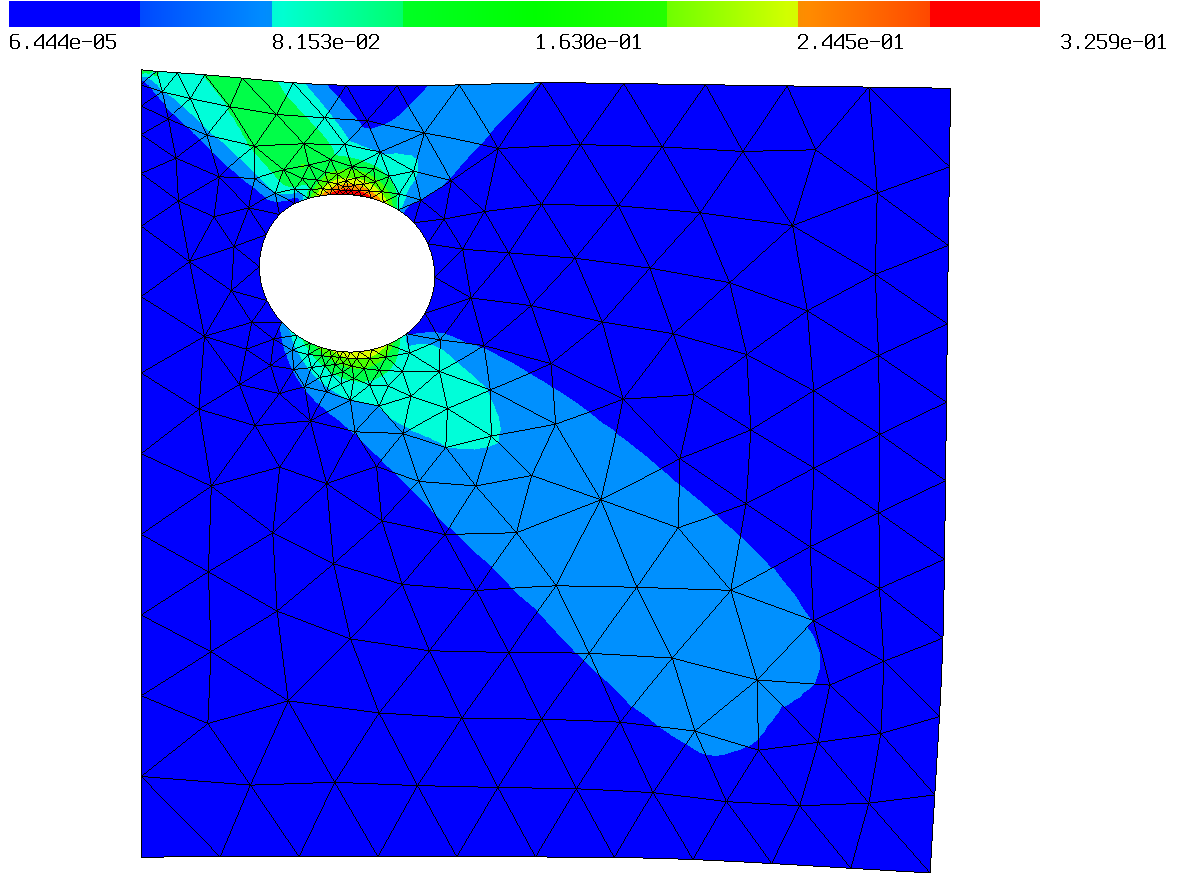}
	\caption{Norm of displacement $\bu$ (left) and plastic strain $\bP$ (right) at maximal traction force $g_{\mathrm{max}}=340$, time step size $\tau=5\cdot 10^{-5}$, and polynomial order $k=3$.}
	\label{fig:result_2d_nodam}
\end{figure}

Before starting the time stepping, the plastic strain is initialized as $\bP=\bP_{\rm old}=\bI$. With the \texttt{Set} method the \texttt{CoefficientFunction} 1 gets interpolated into the finite element space. Since the default value for a \texttt{GridFunction} is zero, all other variables do not need to be initialized.

During the time stepping, see Listing~\ref{lst:timeloopplast}, the update plastic strain $\delta \bP$ is reset to the identity matrix, whereas the displacement is held in order to provide good initial guesses for Newton's method. The Newton minimization algorithm receives the \texttt{BilinearForm} and a \texttt{GridFunction} where the solution is saved. With \texttt{linesearch=True} an optional line-search algorithm is used, where the energy is computed in every Newton step and energy decrease is guaranteed at every step. The built in sparsecholesky solver is used for inverting. After convergence, one needs to update the plastic strain, as only the update $\delta \bP$ has been computed. Therefore, we set the expression for the new plastic strain (see line 9 in Listing~\ref{lst:gfcfplast}) into a temporary \texttt{GridFunction}. Then, the old plastic strain is overwritten. Newton's method recognizes if the condense flag in the \texttt{BilinearForm} is set and acts accordingly. The command in line 5 in Listing~\ref{lst:timeloopplast} activates the \texttt{TaskManager} of NGSolve to assemble the finite element matrices in parallel.

To implement a standard Newton solver one needs to differentiate the energy formulation once to obtain the residual and twice for the linearization matrix. Both tasks are performed automatically in NGSolve by the \texttt{Apply} and \texttt{AssembleLinearization} method of the \texttt{BilinearForm}, see Listing~\ref{lst:newton}. This can be extended by a line-search algorithm.

\lstinputlisting[language=Python,caption={Newton's method in NGSolve.},label=lst:newton]{newton.py}

\subsection{2D plate with hole with damage}\label{Subsec:example_with_dam}
With respect to the previous example, here damage is also taken into account.

\lstinputlisting[language=Python, firstline=1,
lastline=8,caption={Dissipation-coupling and energy-degradation functions for damage.},label=lst:utilityfctdam]{plate_asym_hole_damage.py}
In Listing~\ref{lst:utilityfctdam} three additional utility functions are shown: The damage dissipation $R_{\rm z}^\varepsilon$, see \eqref{def:Psiepsilon}, the coupling function $\rho$, see \eqref{def:rho}, and the elastic degradation function $\zeta$, see \eqref{def:zeta}. 

\lstinputlisting[language=Python, firstline=11, lastline=20,caption={Finite element spaces and trial functions for finite plasticity with damage.},label=lst:fesdam]{plate_asym_hole_damage.py}

The damage $z$ is discretized by a $\Hone$-conforming finite element space with the same polynomial degree as the displacement, see Listing~\ref{lst:fesdam}. 

\lstinputlisting[language=Python, firstline=35,
lastline=47,caption={Bilinear form for finite plasticity coupled with damage.}, label=lst:blfdam]{plate_asym_hole_damage.py}

The corresponding \texttt{BilinearForm} has the same structure as in Listing~\ref{lst:blfplast}, however, the functions $\zeta$, $\rho$, and the dissipation function $R_{\rm z}^\varepsilon$ are incorporated. At the initial time, the material is assumed to be undamaged, namely $z$ is set to $1$, see Listing~\ref{lst:blfdam}. 

\lstinputlisting[language=Python, firstline=49,lastline=58,caption={Decoupling $(\bu,\bP)$ and $z$.}, label=lst:decouple]{plate_asym_hole_damage.py}

We first solve with respect to $(\bu,\bP)$ and then with respect to $z$. Therefore, the degrees of freedom have to be set up properly, see Listing~\ref{lst:decouple}. In highly damaged elements, namely $z\approx 0$, Newton's method may converge slower. It is possible to stop the evolution of damage at the corresponding elements by locking the degrees of freedom.

\lstinputlisting[language=Python,caption={Setting pre-damage.}, label=lst:predam]{plate_asym_hole_predamage.py}

\begin{figure}[h!]
	\centering
	\includegraphics[width=0.275\textwidth]{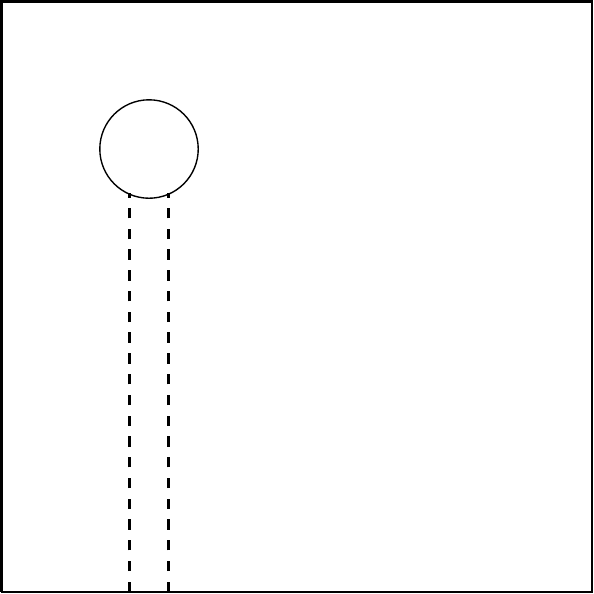}
	\caption{Geometry for plate with hole with pre-damaged area (inside dashed lines).}
	\label{fig:plate_hole_predam}
\end{figure}

In case of a pre-damaged material, see Figure~\ref{fig:plate_hole_predam}, one has to specify the pre-damaged area inside the geometry, see Appendix~\ref{Sec:gen_meshes}. The initial damage values are set as in Listing~\ref{lst:predam}.

\begin{figure}[h!]
	\centering
	\includegraphics[width=0.49\textwidth]{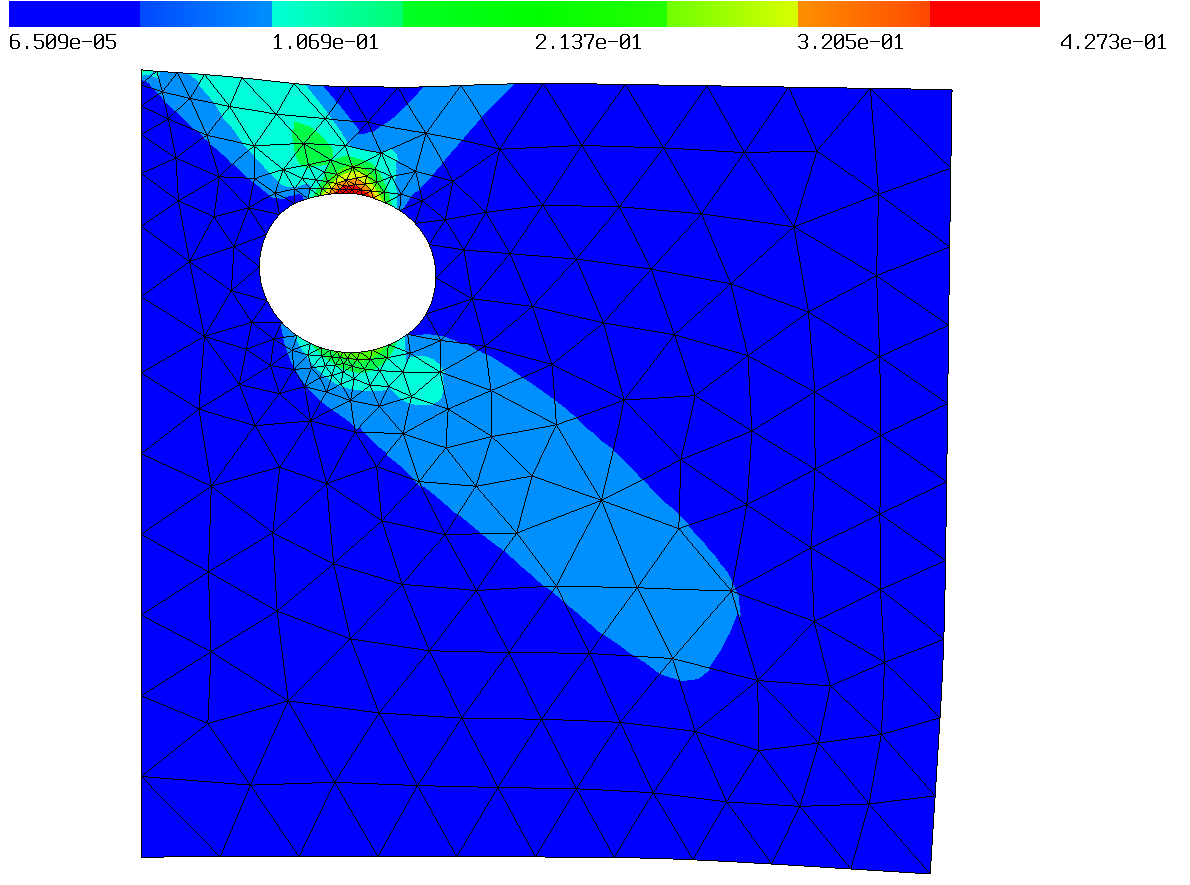}
	\includegraphics[width=0.49\textwidth]{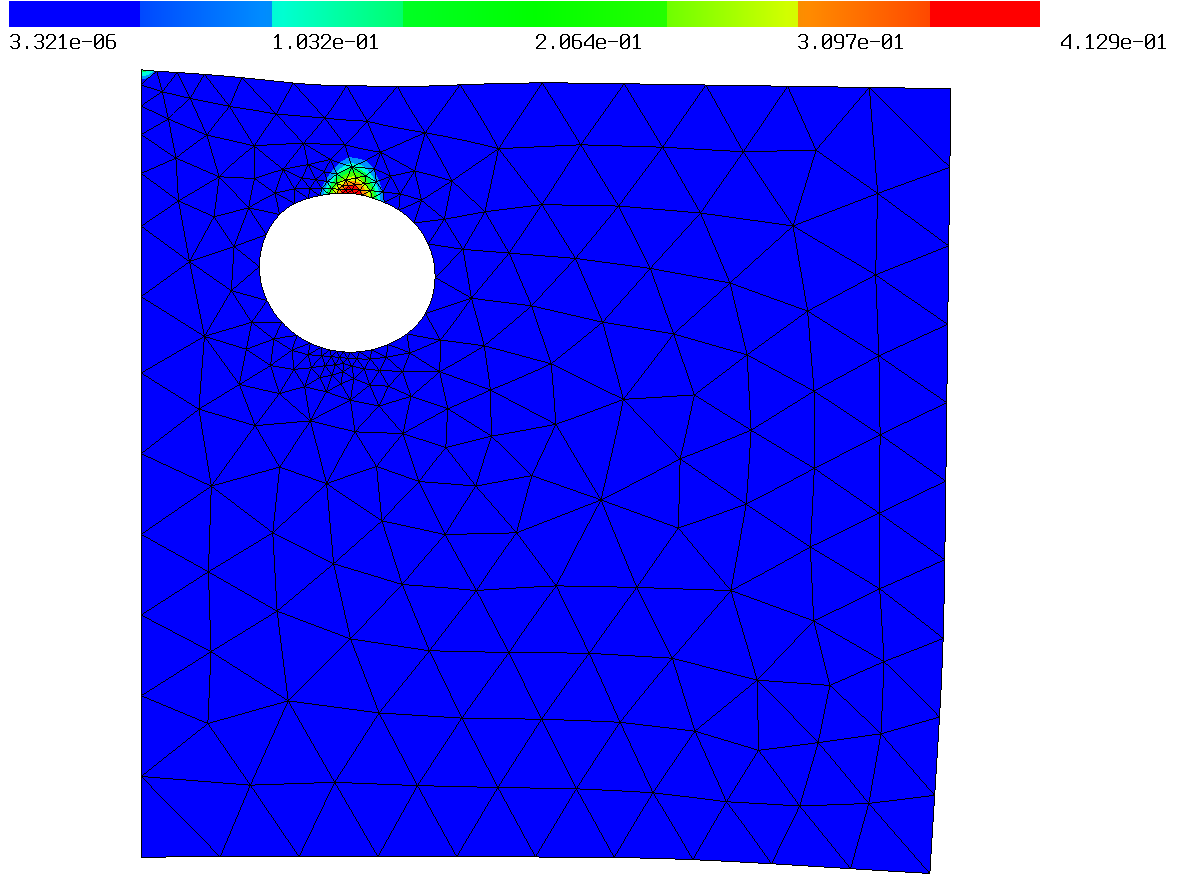}
	\caption{Norm of plastic strain $\bP$ (left) and damage $1-z$ (right) for $g_{\rm max}=340$, $\tau=5\cdot 10^{-5}$, and $k=3$ at maximal traction force.}
	\label{fig:result_2d_dam}
\end{figure}

\begin{figure}[h!]
	\centering
	\includegraphics[width=0.49\textwidth]{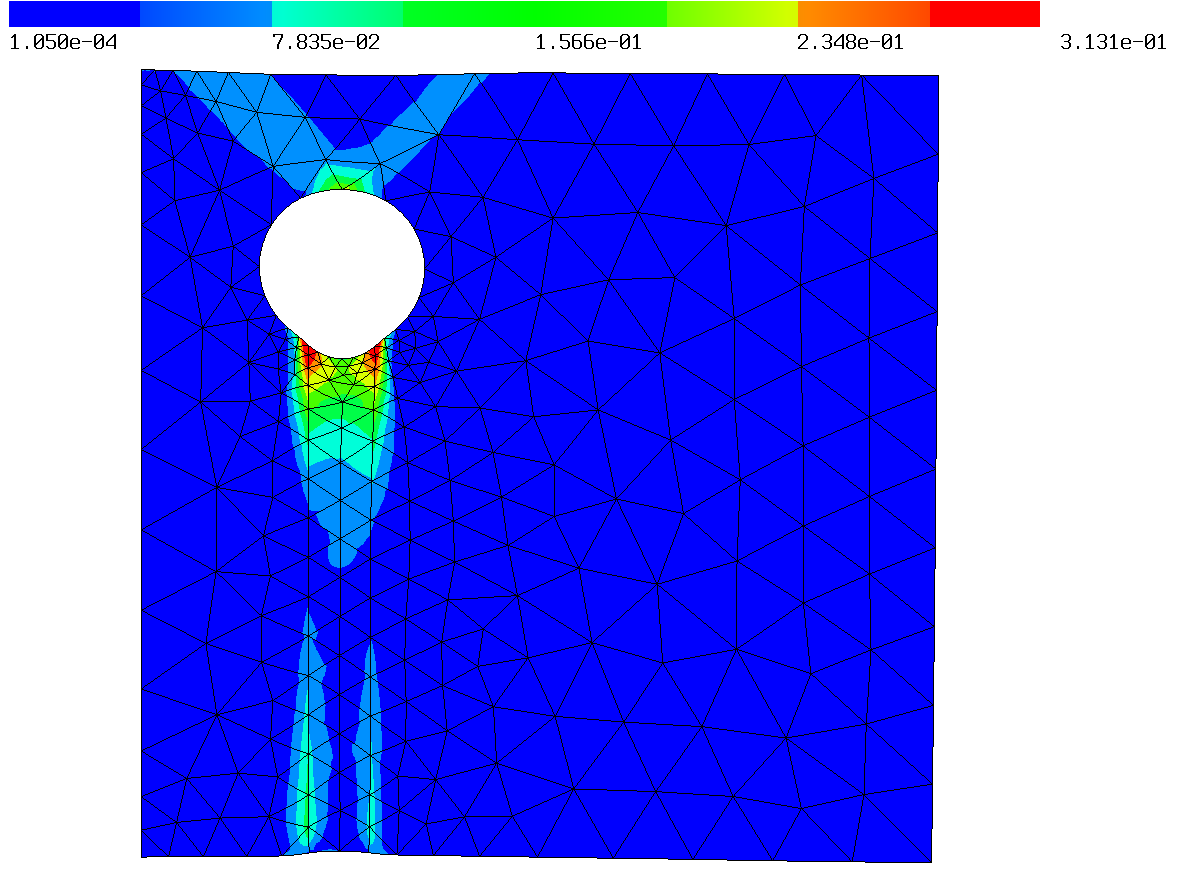}
	\includegraphics[width=0.49\textwidth]{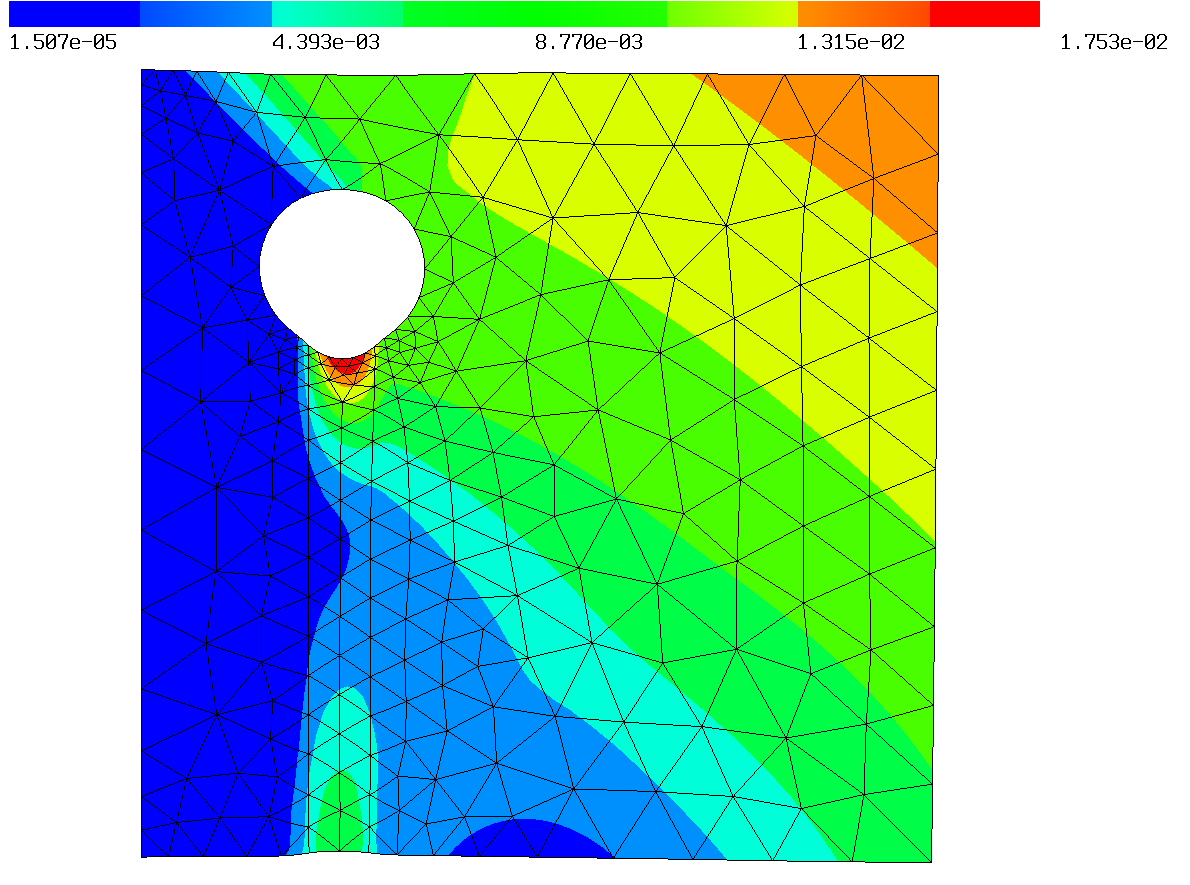}
	\caption{Norm of plastic strain $\bP$ (left) and displacement $\bu$ (right) for $g_{\rm max}=270$, $\tau=5\cdot 10^{-5}$, and polynomial order $k=3$ and pre-damaged area at maximal traction force.}
	\label{fig:result_2d_predam}
\end{figure}

\subsection{2D plate with hole with time-dependent Dirichlet condition}
In this section the setting of the first example is used, but two time-dependent displacements are prescribed on the right boundary instead of a traction force:
\begin{align}
\label{time-dependent-Dir}
\bu^1_D(t) =
\mat{0.1\,t\\0},\qquad {\bu^2_D}(t) = \mat{0.1\,t\\\text{free}}.
\end{align}

\begin{figure}[h!]
	\centering
	\includegraphics[width=0.49\textwidth]{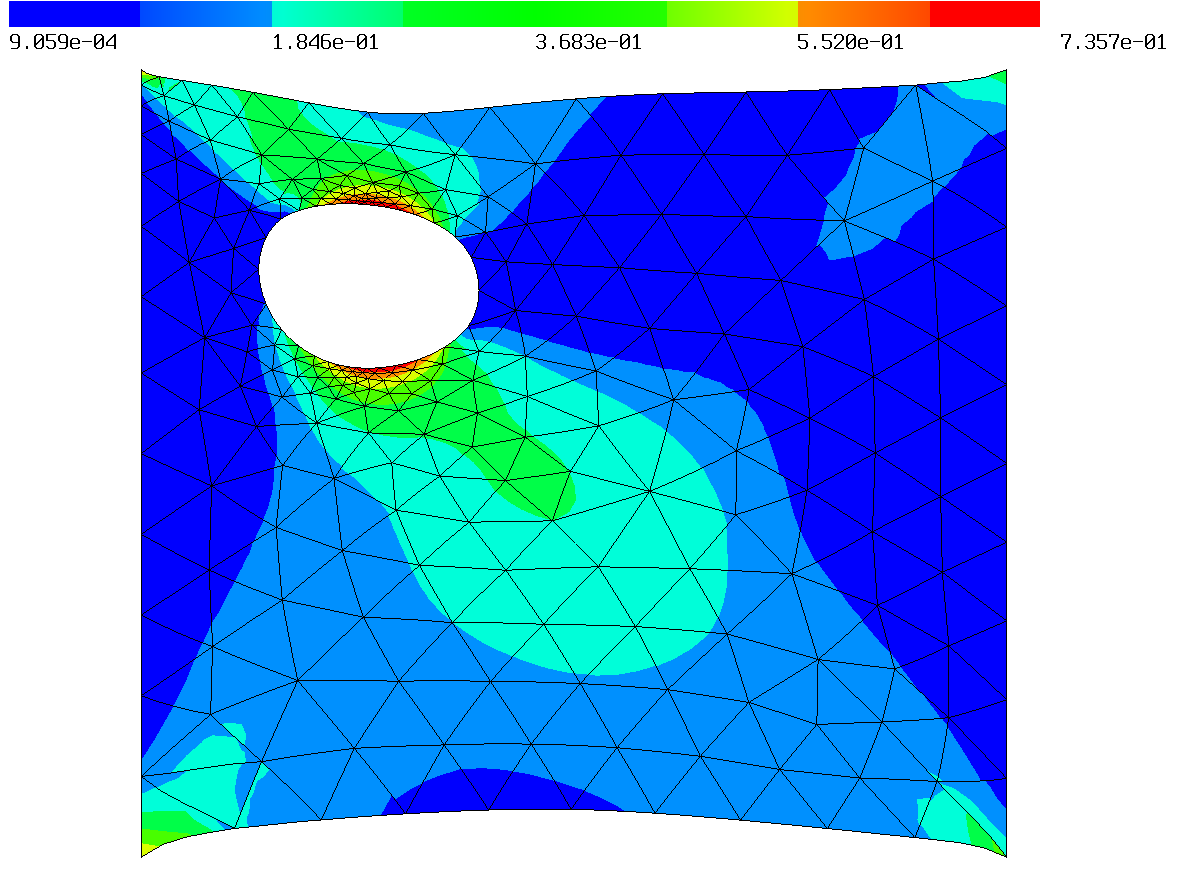}
	\includegraphics[width=0.49\textwidth]{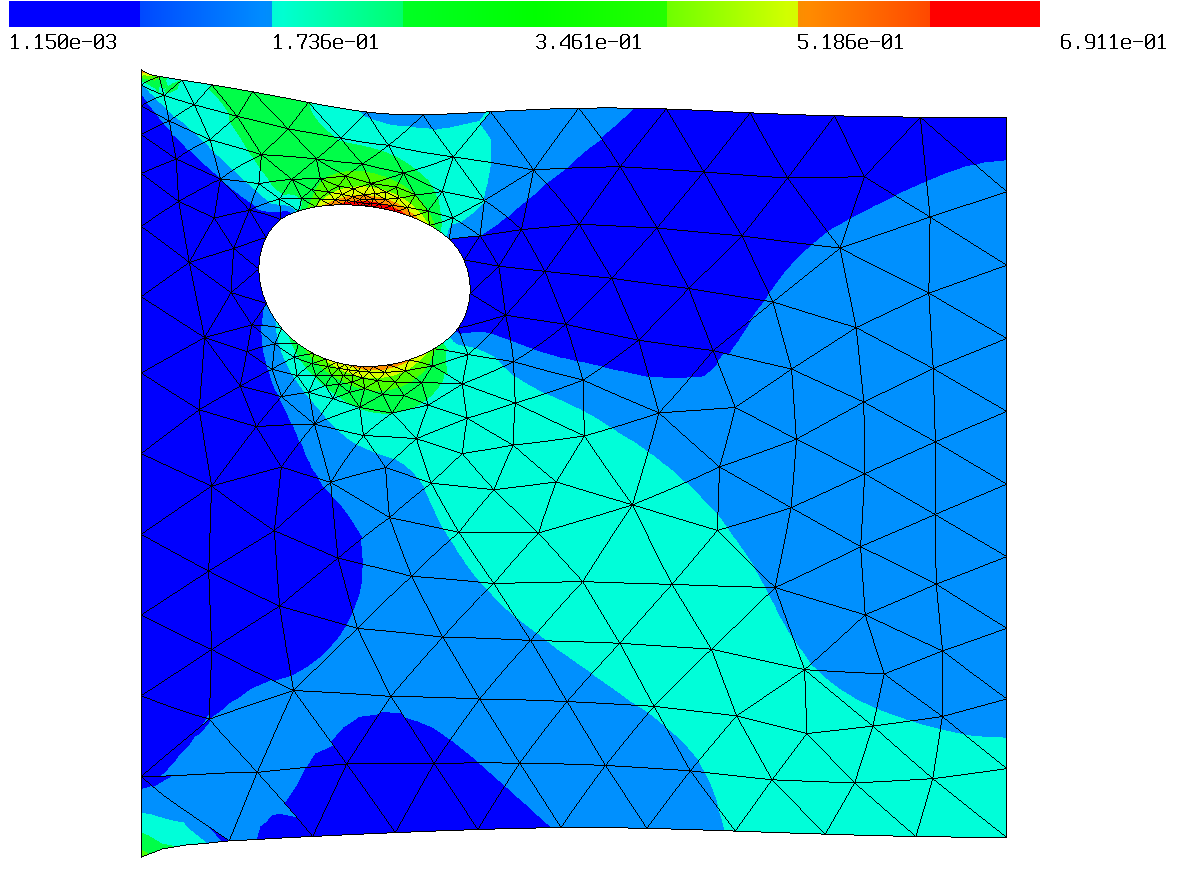}
	
	\caption{Norm of plastic strain $\bP$ for $\bu^1_D$ (left) and $\bu^2_D$ (right) with $\tau=10^{-5}$ and polynomial order $k=3$ at maximal boundary displacement.}
	\label{fig:result_dir_pen_predam}
\end{figure}

\lstinputlisting[language=Python,caption={Dirichlet data $\bu^1_D$ as penalty formulation.}, label=lst:dir_pen]{plate_asym_hole_dir_penalty.py}

Here, the prescribed boundary condition is imposed by including
an additional penalization term of the form
\begin{align}
\frac{1}{\varepsilon} \int_{\Gamma_D}|\bu(\bx)-\bu_D(t_i,\bx)|^2\,dS(x)
\end{align}
into \eqref{inc_min_3D}, see Listing~\ref{lst:dir_pen}. For the first choice in \eqref{time-dependent-Dir} both components of $\bu$ are prescribed, leading to large strains at the right corners. For the second choice, the vertical component is free, see Figure~\ref{fig:result_dir_pen_predam}.

\subsection{3D plate with hole}\label{sec:hole}
In this section we present an example in 3D, see
Figure~\ref{fig:plate3d_hole_geometry} and
Table~\ref{tab:plate3d_hole_geom_par}.  Here, we use the elastic
energy  of Ogden-type from \eqref{coercivityWe}, see Listing~\ref{lst:ogden3d}. The construction of the
corresponding mesh can be found in Appendix~\ref{Sec:gen_meshes}. The
implementation only needs to be adjusted with respect to the 
parametrization  of $\SL$, see equation \eqref{eq:fes_plastic_strains_3D} vs. \eqref{eq:fes_plastic_strains}.
\begin{table}[h!]
	\begin{tabular}{cccccc}
		$l$ [m] & $b$ [m] & $w$ [m] & $r$ [m] & $m_x$ [m] & $m_y$ [m]\\
		\hline 
		1 & 1 & 0.2 & 0.1 & 0.25 & 0.75
	\end{tabular}
	\caption{Geometry data 3D example.}
	\label{tab:plate3d_hole_geom_par}
\end{table}

\begin{figure}[h!]
	\centering
	\includegraphics[width=0.275\textwidth]{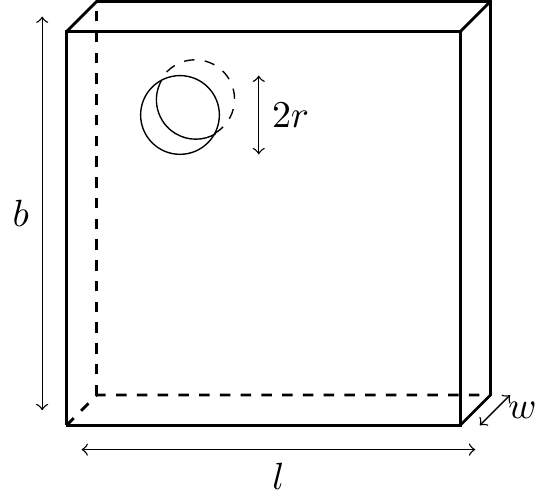}
	\caption{Geometry for 3D-plate with hole.}
	\label{fig:plate3d_hole_geometry}
\end{figure}

\lstinputlisting[language=Python, firstline=7, lastline=8,caption={Ogden-type energy.}, label=lst:ogden3d]{plate_asym_hole_3d.py}

\lstinputlisting[language=Python, firstline=54, lastline=62,caption={Finite element spaces and trial functions for 3D examples.}, label=lst:fes3d]{plate_asym_hole_3d.py}

The space $\text{SL}(3)$ has eight independent components, where we again use an embedding which is well-defined around the identity matrix, see Listing~\ref{lst:fes3d} and Subsection~\ref{sec:FEM}. In addition to the finite element spaces and \texttt{TrialFunctions} in Listing~\ref{lst:fes3d}, the \texttt{GridFunctions} are defined respectively, see Listing~\ref{lst:gfcfplast}.
One can completely reuse the symbolic energy formulation from Listing~\ref{lst:blfdam} and also the time stepping changes only marginally, see Listing~\ref{lst:timeloop3d}.

\lstinputlisting[language=Python, firstline=100, lastline=116,caption={Time loop for 3D example.}, label=lst:timeloop3d]{plate_asym_hole_3d.py}

\begin{figure}[h!]
	\centering
	\includegraphics[width=0.49\textwidth]{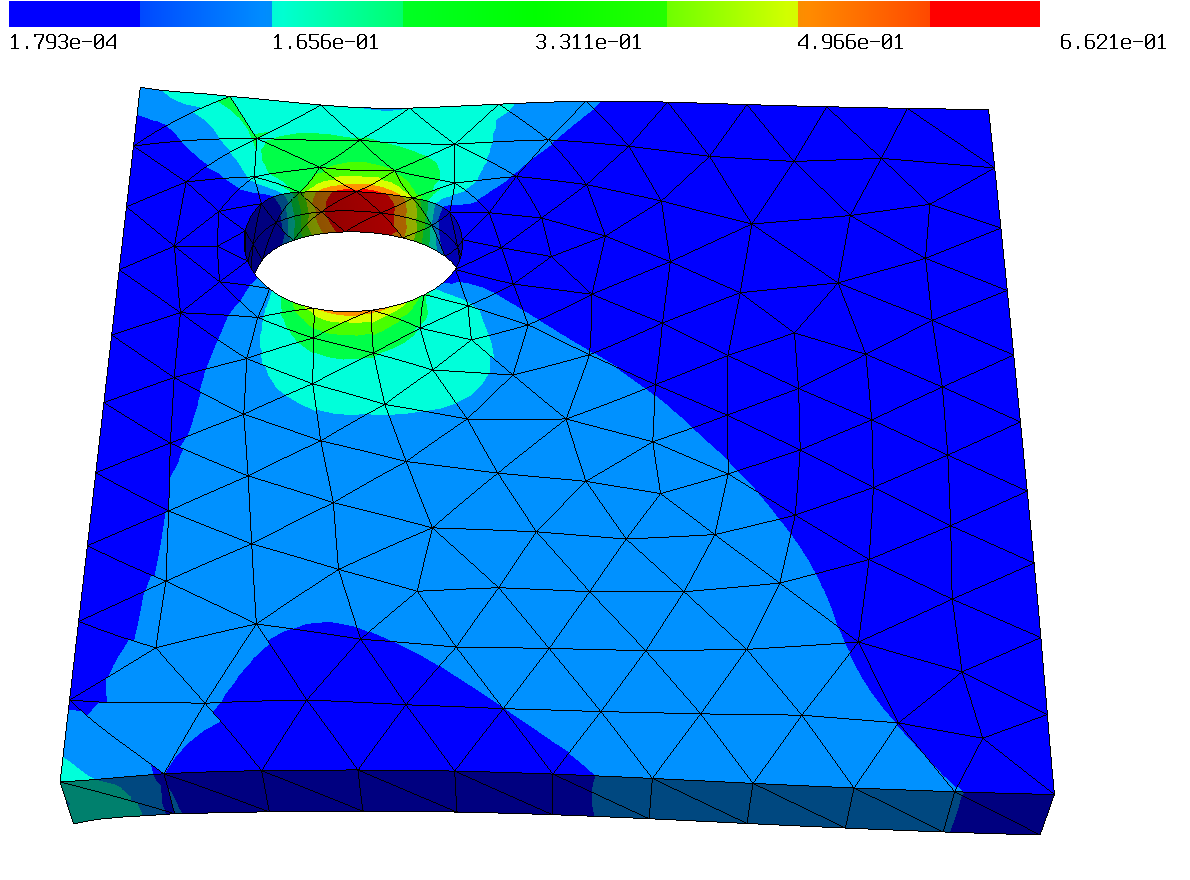}
	\caption{Norm of plastic strain $\bP$ for $g_{\rm max}=340$, $\tau=5\cdot 10^{-5}$, and $k=3$ at maximal traction force.}
	\label{fig:result_3d}
\end{figure}

\section{Conclusions}

We have presented a new phenomenological constitutive model coupling damage and plastic
effects in solids at finite strains. Damage and plastic evolution are rate-independent and coupled
both at the level of the energy and of the dissipation. In particular,
the accumulation of damage induces degradation of the elastic response
and of the effective plastic yield stress. The performance of the
constitutive model has been assessed via a suite of uniaxial and
non-proportional multi-axial  tests.

By coupling the constitutive model with quasistatic equilibrium, the
three dimensional quasistatic incremental problem has been proved to
admit solutions. These are then approximated by a finite element method.
Two- and three-dimensional numerical experiments were performed
within the high-performance multiphysics platform of NETGEN/NGSolve, by
taking advantage of the variational structure of the model. With this
approach, it is possible to work at the level of energy and
dissipation, without calculating derivatives by hand. Different choices of
time-dependent boundary conditions,  as well as the possible occurrence
of pre-damaged regions, can be considered.

Various generalizations of the model could be of interest. Different
choices for elastic
and plastic potentials could be considered.
Damage effects on
plastic hardening, plastic effects on the damage yield stress, and
damage healing could be included, as well as an extension to
viscoplastic response.

\section*{Declaration of interests}
 The authors declare that they have no known competing financial
 interests or personal relationships that could have appeared to
 influence the work reported in this paper.

\section*{Acknowledgments}
\label{sec:acknowledgments}
The support by the Austrian Science Fund (FWF)
projects F\,65, W\,1245,  I\,4354,  and P\,3278 and of the Vienna
Science and Technology Fund (WWTF) project MA14-009 is gratefully
acknowledged.  The
authors are indebted to Alessandro Reali for some interesting comments.

\appendix
\section{Generate meshes}\label{Sec:gen_meshes}
\subsection{2D meshes}
To generate the two dimensional geometry given in Figure~\ref{fig:plate_hole_geometry}, with sizes given in Table~\ref{tab:plate_hole_geom_par}, in NETGEN via NGSolve one can us the \texttt{SplineGeometry} class.

\lstinputlisting[language=Python, firstline=1, lastline=20,caption={Gererating a simple 2D geometry in NETGEN via NGSolve.},label=lst:gen2dmesh]{geom2d.py}

First, the points of the geometry are added. As we expect larger plastic strains at the top and bottom of the hole and on the upper left corner, we locally define a finer grid by setting the maximal mesh size at the corresponding points, cf. Listing~\ref{lst:gen2dmesh}.

Next, the lines of the outer rectangle are added and the boundaries are named. With \texttt{leftdomain} and \texttt{rightdomain} one can specify which domain lies left and right of the line, where 0 means no domain. The hole is generated by four spline segments, where the second point denotes the control point of the spline. Then, a mesh is generated automatically by specifying the overall maximal mesh size. To obtain a curved mesh of a specific polynomial order we use the \texttt{Curve} method of the mesh.
The order of curving should coincide with the polynomial order used for the displacement to obtain an isoperimetric mapping. It is also possible to add every element by hand. In addition to triangles, quadrilaterals are also supported by NETGEN.\newline

\lstinputlisting[language=Python, firstline=23, lastline=55,caption={Adding damage zone to geometry.}, label=lst:adddamzone]{geom2d.py}

In Listing~\ref{lst:adddamzone} a second domain is added defining the pre-damaged area of Figure~\ref{fig:plate_hole_predam}. After specifying the damage area one can assign names to the different parts of the material to access them easier.

\subsection{3D meshes}In 3D, the geometry of Figure~\ref{fig:plate3d_hole_geometry} can be created by using Constructive Solid Geometries (CSG).

\lstinputlisting[language=Python,caption={Gererating simple 3D geometry in NETGEN via NGSolve.},label=lst:gen3dmesh]{geom3d.py}

First, six planes for the sides and an (infinite) cylinder for the hole are created. Then, intersecting the planes and intersecting the complement of the cylinder yields the requested geometry.

\bibliographystyle{acm}
\bibliography{cites}

\begin{thebibliography}{10}

\bibitem{ASTM19}
{\em ASTM A36 / A36M-19. Standard Specification for Carbon Structural Steel}.
\newblock ASTM International, West Conshohocken, PA, 2019.

\bibitem{Albert99}
{\sc Alberty, J., Carstensen, C., and Zarrabi, D.}
\newblock Adaptive numerical analysis in primal elastoplasticity with
  hardening.
\newblock {\em Computer Methods in Applied Mechanics and Engineering 171}, 3-4
  (1999), 175--204.

\bibitem{Ball76}
{\sc Ball, J.~M.}
\newblock Convexity conditions and existence theorems in nonlinear elasticity.
\newblock {\em Archive for rational mechanics and Analysis 63}, 4 (1976),
  337--403.

\bibitem{Bonetti}
{\sc Bonetti, E., Rocca, E., Rossi, R., and Thomas, M.}
\newblock A rate-independent gradient system in damage coupled with plasticity
  via structured strains.
\newblock {\em ESAIM: Proceedings and Surveys 54\/} (2016), 54--69.

\bibitem{Bra13}
{\sc Braess, D.}
\newblock {\em Finite {E}lemente - {T}heorie, schnelle {L}\"{o}ser und
  {A}nwendungen in der {E}lastizit\"{a}tstheorie}, 5~ed.
\newblock Springer-Verlag, Berlin Heidelberg, 2013.

\bibitem{BCV05}
{\sc Brokate, M., Carstensen, C., and Valdman, J.}
\newblock A quasi-static boundary value problem in multi-surface
  elastoplasticity: {P}art 2 -- numerical solution.
\newblock {\em Mathematical Methods in the Applied Sciences 28}, 8 (2005),
  881--901.

\bibitem{Carst97}
{\sc Carstensen, C.}
\newblock Domain decomposition for a non-smooth convex minimization problem and
  its application to plasticity.
\newblock {\em Numerical Linear Algebra with Applications 4}, 3 (1997),
  177--190.

\bibitem{Cerm19}
{\sc {\v{C}}erm{\'a}k, M., Sysala, S., and Valdman, J.}
\newblock Efficient and flexible matlab implementation of 2d and 3d
  elastoplastic problems.
\newblock {\em Applied Mathematics and Computation 355\/} (2019), 595--614.

\bibitem{Chaboche}
{\sc Chaboche, J.-L.}
\newblock Continuous damage mechanics -- {A} tool to describe phenomena before
  crack initiation.
\newblock {\em Nuclear Engineering and Design 64}, 2 (1981), 233--247.

\bibitem{Chaboche2_1}
{\sc Chaboche, J.~L.}
\newblock Continuum damage mechanics: {P}art {I}--general concepts.
\newblock {\em Journal of Applied Mechanics 55}, 1 (1988), 59--64.

\bibitem{Chaboche2_2}
{\sc Chaboche, J.~L.}
\newblock Continuum damage mechanics: {P}art {II}--damage growth, crack
  initiation, and crack growth.
\newblock {\em Journal of Applied Mechanics 55}, 1 (1988), 65--72.

\bibitem{Crismale}
{\sc Crismale, V.}
\newblock Globally stable quasistatic evolution for strain gradient plasticity
  coupled with damage.
\newblock {\em Annali di Matematica Pura ed Applicata (1923-) 196}, 2 (2017),
  641--685.

\bibitem{DalMaso}
{\sc Dal~Maso, G., Orlando, G., and Toader, R.}
\newblock Fracture models for elasto-plastic materials as limits of gradient
  damage models coupled with plasticity: the antiplane case.
\newblock {\em Calculus of Variations and Partial Differential Equations 55}, 3
  (2016), 45.

\bibitem{Davoli14}
{\sc Davoli, E.}
\newblock Linearized plastic plate models as ${\Gamma} $-limits of {3D} finite
  elastoplasticity.
\newblock {\em ESAIM: Control, Optimisation and Calculus of Variations 20}, 3
  (2014), 725--747.

\bibitem{Davoli14b}
{\sc Davoli, E.}
\newblock Quasistatic evolution models for thin plates arising as low energy
  ${\Gamma}$-limits of finite plasticity.
\newblock {\em Mathematical Models and Methods in Applied Sciences 24}, 10
  (2014), 2085--2153.

\bibitem{Davoli}
{\sc Davoli, E., Roub{\'\i}{\v{c}}ek, T., and Stefanelli, U.}
\newblock Dynamic perfect plasticity and damage in viscoelastic solids.
\newblock {\em ZAMM - Journal of Applied Mathematics and Mechanics /
  Zeitschrift f{\"u}r Angewandte Mathematik und Mechanik 99}, 7 (2019),
  e201800161.

\bibitem{Dust01}
{\sc D{\"u}ster, A., and Rank, E.}
\newblock The p-version of the finite element method compared to an adaptive
  h-version for the deformation theory of plasticity.
\newblock {\em Computer Methods in Applied Mechanics and Engineering 190},
  15--17 (2001), 1925--1935.

\bibitem{Gall96}
{\sc Gallimard, L., Ladev{\`e}ze, P., and Pelle, J.~P.}
\newblock Error estimation and adaptivity in elastoplasticity.
\newblock {\em International Journal for Numerical Methods in Engineering 39},
  2 (1996), 189--217.

\bibitem{Ganjiani}
{\sc Ganjiani, M., Naghdabadi, R., and Asghari, M.}
\newblock An elastoplastic damage-induced anisotropic constitutive model at
  finite strains.
\newblock {\em International Journal of Damage Mechanics 22}, 4 (2013),
  499--529.

\bibitem{Giacomini13}
{\sc Giacomini, A., and Musesti, A.}
\newblock Quasi-static evolutions in linear perfect plasticity as a variational
  limit of finite plasticity: A one-dimensional case.
\newblock {\em Mathematical Models and Methods in Applied Sciences 23}, 7
  (2013), 1275--1308.

\bibitem{Glow81}
{\sc Glowinski, R., Lions, J., and Tr{\'e}moli{\`e}res, R.}
\newblock {\em Numerical Analysis of Variational Inequalities}.
\newblock North-Holland, Amsterdam, 1981.

\bibitem{cplas_part1}
{\sc Grandi, D., and Stefanelli, U.}
\newblock Finite plasticity in ${P}^\top \! {P}$. {P}art {I}: constitutive
  model.
\newblock {\em Continuum Mechanics and Thermodynamics 29\/} (2017), 97--116.

\bibitem{cplas_part2}
{\sc Grandi, D., and Stefanelli, U.}
\newblock Finite plasticity in ${P}^\top \! {P}$. {P}art {II}: Quasi-static
  evolution and linearization.
\newblock {\em SIAM Journal on Mathematical Analysis 49}, 2 (2017), 1356--1384.

\bibitem{Grub08}
{\sc Gruber, P.~G., and Valdman, J.}
\newblock Solution of one-time-step problems in elastoplasticity by a slant
  newton method.
\newblock {\em SIAM Journal on Scientific Computing 31}, 2 (2009), 1558--1580.

\bibitem{Gurson}
{\sc Gurson, A.~L.}
\newblock Continuum theory of ductile rupture by void nucleation and growth:
  {P}art {I}--yield criteria and flow rules for porous ductile media.
\newblock {\em Journal of Engineering Materials and Technology 99}, 1 (1977),
  2--15.

\bibitem{HN75}
{\sc Halphen, B., and Son~Nguyen, Q.}
\newblock Sur les mat{\'e}riaux standard g{\'e}n{\'e}ralis{\'e}s.
\newblock {\em Journal de M{\'e}canique 14\/} (1975), 39--63.

\bibitem{Han95}
{\sc Han, W., and Reddy, B.~D.}
\newblock Computational plasticity: The variational basis and numerical
  analysis.
\newblock {\em Computational Mechanics Advances 2\/} (1995), 283--400.

\bibitem{John77}
{\sc Johnson, C.}
\newblock A mixed finite element method for plasticity problems with hardening.
\newblock {\em SIAM Journal on Numerical Analysis 14}, 4 (1977), 575--583.

\bibitem{John78}
{\sc Johnson, C.}
\newblock On plasticity with hardening.
\newblock {\em Journal of Mathematical Analysis and Applications 62}, 2 (1978),
  325--336.

\bibitem{Ju}
{\sc Ju, J.~W.}
\newblock Energy-based coupled elastoplastic damage models at finite strains.
\newblock {\em Journal of Engineering Mechanics 115}, 11 (1989), 2507--2525.

\bibitem{Kachanov}
{\sc Kachanov, L.}
\newblock Time of the rupture process under creep conditions.
\newblock {\em Izvestiia Akademii Nauk SSSR, Otdelenie Teckhnicheskikh Nauk
  8\/} (1958), 26--31.

\bibitem{Kachanov2}
{\sc Kachanov, L.}
\newblock {\em Introduction to continuum damage mechanics}.
\newblock Martinus Nijhoff Publishers, Dordrecht, 1986.

\bibitem{Krajdnovic}
{\sc Krajcinovic, D.}
\newblock Continuum damage mechanics.
\newblock {\em Appl. Mech. Rev. 37\/} (1984), 397--402.

\bibitem{Kroener}
{\sc Kr{\"o}ner, E.}
\newblock Allgemeine {K}ontinuumstheorie der {V}ersetzungen und
  {E}igenspannungen.
\newblock {\em Archive for Rational Mechanics and Analysis 4}, 1 (1959),
  273--334.

\bibitem{compos2}
{\sc Kru{\v{z}}{\'\i}k, M., Melching, D., and Stefanelli, U.}
\newblock Quasistatic evolution for dislocation-free finite plasticity.
\newblock {\em arXiv:1912.10118\/} (2019).

\bibitem{Kruzik-Roubicek}
{\sc Kru{\v{z}}{\'\i}k, M., and Roub{\'\i}{\v{c}}ek, T.}
\newblock {\em Mathematical methods in continuum mechanics of solids}, 1~ed.
\newblock Springer, 2019.

\bibitem{Lee69}
{\sc Lee, E.~H.}
\newblock Elastic-plastic deformation at finite strains.
\newblock {\em Journal of Applied Mechanics 36}, 1 (1969), 1--6.

\bibitem{Lamaitre2}
{\sc Lemaitre, J.}
\newblock A continuous damage mechanics model for ductile fracture.
\newblock {\em Journal of Engineering Materials and Technology 107}, 1 (1985),
  83--89.

\bibitem{Lamaitre}
{\sc Lemaitre, J.}
\newblock Coupled elasto-plasticity and damage constitutive equations.
\newblock {\em Computer Methods in Applied Mechanics and Engineering 51}, 1
  (1985), 31 -- 49.

\bibitem{LC90}
{\sc Lemaitre, J., and Chaboche, J.-L.}
\newblock {\em Mechanics of Solid Materials}.
\newblock Cambridge University Press, UK, 1990.

\bibitem{Mahnken}
{\sc Mahnken, R.}
\newblock A comprehensive study of a multiplicative elastoplasticity model
  coupled to damage including parameter identification.
\newblock {\em Computers \& Structures 74}, 2 (2000), 179 -- 200.

\bibitem{MM09}
{\sc Mainik, A., and Mielke, A.}
\newblock Global existence for rate-independent gradient plasticity at finite
  strain.
\newblock {\em Journal of Nonlinear Science 19}, 3 (2009), 221--248.

\bibitem{Maugin92}
{\sc Maugin, G.~A.}
\newblock {\em The Thermomechanics of Plasticity and Fracture}.
\newblock Cambridge Texts in Applied Mathematics. Cambridge University Press,
  1992.

\bibitem{MSZ19}
{\sc Melching, D., Scala, R., and Zeman, J.}
\newblock Damage model for plastic materials at finite strains.
\newblock {\em ZAMM - Journal of Applied Mathematics and Mechanics /
  Zeitschrift für Angewandte Mathematik und Mechanik 99}, 9 (2019),
  e201800032.

\bibitem{Menzel}
{\sc Menzel, A., and Steinmann, P.}
\newblock Geometrically non-linear anisotropic inelasticity based on fictitious
  configurations: Application to the coupling of continuum damage and
  multiplicative elasto-plasticity.
\newblock {\em International Journal for Numerical Methods in Engineering 56},
  14 (2003), 2233--2266.

\bibitem{Miehe}
{\sc Miehe, C.}
\newblock On the representation of {P}randtl-{R}euss tensors within the
  framework of multiplicative elastoplasticity.
\newblock {\em International Journal of Plasticity 10}, 6 (1994), 609 -- 621.

\bibitem{MIEHE94}
{\sc Miehe, C., Stein, E., and Wagner, W.}
\newblock Associative multiplicative elasto-plasticity: Formulation and aspects
  of the numerical implementation including stability analysis.
\newblock {\em Computers \& Structures 52}, 5 (1994), 969 -- 978.

\bibitem{Mielke04b}
{\sc Mielke, A.}
\newblock Existence of minimizers in incremental elasto-plasticity with finite
  strains.
\newblock {\em SIAM Journal on Mathematical Analysis 36}, 2 (2004), 384--404.

\bibitem{Mielke-Mueller}
{\sc Mielke, A., and M{\"u}ller, S.}
\newblock Lower semicontinuity and existence of minimizers in incremental
  finite-strain elastoplasticity.
\newblock {\em ZAMM - Journal of Applied Mathematics and Mechanics /
  Zeitschrift f{\"u}r Angewandte Mathematik und Mechanik 86}, 3 (2006),
  233--250.

\bibitem{MR15}
{\sc Mielke, A., and Roub{\'\i}{\v{c}}ek, T.}
\newblock {\em Rate-Independent systems: Theory and Applications}.
\newblock Springer, 2015.

\bibitem{MR16}
{\sc Mielke, A., and Roub{\'\i}{\v{c}}ek, T.}
\newblock Rate-independent elastoplasticity at finite strains and its numerical
  approximation.
\newblock {\em Mathematical Models and Methods in Applied Sciences 26}, 12
  (2016), 2203--2236.

\bibitem{MS13}
{\sc Mielke, A., and Stefanelli, U.}
\newblock Linearized plasticity is the evolutionary ${\Gamma}$-limit of finite
  plasticity.
\newblock {\em Journal of the European Mathematical Society 15}, 3 (2013),
  923--948.

\bibitem{MT04}
{\sc Mielke, A., and Theil, F.}
\newblock On rate-independent hysteresis models.
\newblock {\em Nonlinear Differential Equations and Applications NoDEA 11}, 2
  (2004), 151--189.

\bibitem{Murakami}
{\sc Murakami, S.}
\newblock Mechanical modeling of material damage.
\newblock {\em Journal of Applied Mechanics 55}, 2 (1988), 280--286.

\bibitem{Nagt74}
{\sc Nagtegaal, J., Parks, D., and Rice, J.}
\newblock On numerically accurate finite element solutions in the fully plastic
  range.
\newblock {\em Computer Methods in Applied Mechanics and Engineering 4}, 2
  (1974), 153 -- 177.

\bibitem{Peric94}
{\sc Peri{\'c}, D., Yu, J., and Owen, D. R.~J.}
\newblock On error estimates and adaptivity in elastoplastic solids:
  Applications to the numerical simulation of strain localization in classical
  and cosserat continua.
\newblock {\em International Journal for Numerical Methods in Engineering 37},
  8 (1994), 1351--1379.

\bibitem{Ramm03}
{\sc Ramm, E., Rank, E., Rannacher, R., Schweizerhof, K., Stein, E., Wendland,
  W., Wittum, G., Wriggers, P., and Wunderlich, W.}
\newblock {\em Error-controlled adaptive finite elements in solid mechanics}.
\newblock John Wiley \& Sons Ltd, Chichester, 2003.

\bibitem{R70}
{\sc Rockafellar, R.~T.}
\newblock {\em Convex analysis}.
\newblock Princeton University Press, Princeton, New Jersey, 1970.

\bibitem{Roub16}
{\sc Roub{\'i}cek, T., and Valdman, J.}
\newblock Perfect plasticity with damage and healing at small strains, its
  modeling, analysis, and computer implementation.
\newblock {\em SIAM Journal of Applied Mathematics 76}, 1 (2016), 314--340.

\bibitem{Sch97}
{\sc Sch{\"o}berl, J.}
\newblock {NETGEN} an advancing front 2{D}/3{D}-mesh generator based on
  abstract rules.
\newblock {\em Computing and Visualization in Science 1}, 1 (1997), 41--52.

\bibitem{Sch14}
{\sc Sch{\"o}berl, J.}
\newblock C++ 11 implementation of finite elements in {NGS}olve.
\newblock {\em Institute for Analysis and Scientific Computing, Vienna
  University of Technology\/} (2014).

\bibitem{SH98}
{\sc Simo, J., and Hughes, T.}
\newblock {\em Computational Inelasticity}, 1~ed., vol.~7.
\newblock Springer-Verlag New York, NY, 1998.

\bibitem{Simo}
{\sc Simo, J., and Ju, J.}
\newblock Strain- and stress-based continuum damage models--{I}. {F}ormulation.
\newblock {\em International Journal of Solids and Structures 23}, 7 (1987),
  821 -- 840.

\bibitem{Simo85}
{\sc Simo, J., and Ortiz, M.}
\newblock A unified approach to finite deformation elastoplastic analysis based
  on the use of hyperelastic constitutive equations.
\newblock {\em Computer Methods in Applied Mechanics and Engineering 49}, 2
  (1985), 221 -- 245.

\bibitem{compos}
{\sc Stefanelli, U.}
\newblock Existence for dislocation-free finite plasticity.
\newblock {\em ESAIM: Control Optim. Calc. Var. 25\/} (2019), 21.

\bibitem{curl}
{\sc Stefanelli, U., and Scala, R.}
\newblock Linearization for finite plasticity under dislocation-density tensor
  regularization.
\newblock {\em submitted\/} (2019).

\bibitem{Steinmann}
{\sc Steinmann, P., Miehe, C., and Stein, E.}
\newblock Comparison of different finite deformation inelastic damage models
  within multiplicative elastoplasticity for ductile materials.
\newblock {\em Computational Mechanics 13}, 6 (1994), 458--474.

\bibitem{Szabo04}
{\sc Szab{\`o}, B., D{\"u}ster, A., and Rank, E.}
\newblock {\em The p-Version of the Finite Element Method}.
\newblock American Cancer Society, 2004, ch.~5.

\bibitem{Zienk00}
{\sc Zienkiewicz, O., and Taylor, R.}
\newblock {\em The Finite Element Method. {V}ol. 1: {T}he Basis}, 5~ed.
\newblock Butterworth-Heinemann, Oxford, 2000.

\end{thebibliography}
\end{document}